\long\def\symbolfootnote[#1]#2{\begingroup%
\def\thefootnote{\fnsymbol{footnote}}\footnote[#1]{#2}\endgroup}
\newcommand{\n}{\mathfrak n}
\newcommand{\x}{\mathfrak X}
\newcommand{\h}{\mathfrak h}
\newcommand{\diag}{\textup{diag}}
\def\imod#1{\allowbreak\mkern10mu({\operator@font mod}\,\,#1)}
\newtheorem{theorem}{Theorem}[section]
\newtheorem{lemma}[theorem]{Lemma}
\newtheorem{corollary}[theorem]{Corollary}
\newtheorem{proposition}[theorem]{Proposition}
\newtheorem*{theorem*}{Theorem}
\theoremstyle{definition}
\newtheorem{example}[theorem]{Example}
\numberwithin{equation}{section}
\newcommand{\ignore}[1]{}
\newcommand{\mynote}[1]{}
\begin{document}
\setcounter{section}{0}
\title{Computing $n^{\rm th}$ roots in ${\rm SL}_2$ and Fibonacci polynomials}
\author{Amit Kulshrestha}
\address{IISER Mohali, Knowledge City, Sector 81, Mohali 140 306 INDIA}
\email{amitk@iisermohali.ac.in}
\author{Anupam Singh}
\address{IISER Pune, Dr. Homi Bhabha Road, Pashan, Pune 411 008 INDIA}
\email{anupamk18@gmail.com}
\thanks{This work is supported by DST, India through Indo-Russian research grant INT/RUS/RFBR/P-288. The first named author also acknowledges SERB grant EMR/2016/001516 for supporting this work.}
\subjclass[2010]{20G15, 37P35, 11P05, 11B39}
\today
\keywords{${\rm SL}_2$, $n^{\rm th}$-roots, Fibonacci polynomials}


\begin{abstract}
Let $k$ be a field of characteristic $\neq 2$. In this paper we study squares, cubes and their products in split and anisotropic groups of type $A_1$.
In split case, we show that computing $n^{\rm th}$ roots is equivalent to finding solutions of certain polynomial equations in at most two variables over the base field $k$. The description of these polynomials involves generalised Fibonacci polynomials.  Using this we obtain asymptotic proportions of
$n^{\rm th}$ powers, and conjugacy classes which are $n^{\rm th}$ powers, in ${\rm SL}_2(\mathbb F_q)$ when $n$ is a prime or $n = 4$.

We also extend already known Waring type result for ${\rm SL}_2(\mathbb F_q)$, that every element of ${\rm SL}_2(\mathbb F_q)$ is a product of two squares, to ${\rm SL}_2(k)$ for an arbitrary $k$. For anisotropic groups of type $A_1$, namely ${\rm SL}_1(Q)$ where $Q$ is a quaternion division algebra, we prove that when $2$ is a square in $k$, every element of ${\rm SL}_1(Q)$ is a product of two squares if and only if $-1$ is a square in ${\rm SL}_1(Q)$.
\end{abstract}

\maketitle

\section{Introduction}
Let $k$ be a field of characteristic $\neq 2$, and let $G$ be a linear algebraic group over $k$ which is a form of ${\rm SL}_2$. In this paper, we determine which elements $g\in G(k)$ admit solutions in $G(k)$ of equations of the type $X^n=g$ and $X_1^nX_2^n=g$. The split and anisotropic forms of ${\rm SL}_2$ over $k$ give rise to the groups ${\rm SL}_2(k)$ and ${\rm SL}_1(Q)$ where $Q$ is a quaternion central division algebra over $k$. When $X^n=g$ admits a solution for a given $g$, we explicitly determine all of them.  We reduce the problem of finding solutions of $X^n=g$ in ${\rm SL}_2(k)$ to finding simultaneous solutions of some polynomial equations in at most two variables. The equation $X^n=g$ for $g\in {\rm SL}_1(Q)$ can be solved in ${\rm SL}_1(Q)$ via embedding 
of ${\rm SL}_1(Q)$ in ${\rm SL}_2(K)$ where $K$ is a maximal subfield in $Q$. 

The proportion of $n^{\rm th}$ powers in a finite group, and an asymptotic formula for the same is of interest in combinatorics. See, for example~\cite{bl,po}, where the authors have computed this for the symmetric groups. We count the number $c(n,q)$ of conjugacy classes in ${\rm SL}_2(\mathbb F_q)$ which are $n^{\rm th}$ powers, and $s(n,q)$, the number of elements in ${\rm SL}_2(\mathbb F_q)$ which are $n^{\rm th}$ powers. This is the content of Theorem~\ref{count-prime-powers}. We compute these directly without requiring character theory. As an application to our counting an alternate proof of some well known Waring type theorems (see~\cite{lst1, lst2, lost, glost} for more general results) immediately follows in Corollary~\ref{two-n-powers}. 

Since our computation of determining $n^{\rm th}$ roots works over an arbitrary field, as an application, we prove some Waring type of results for forms of ${\rm SL}_2$. When characteristic ${\rm char}(k) \neq 2$, we prove that every element of split group ${\rm SL}_2(k)$ is a product of two squares. In addition, if $2$ is a square in $k$, we prove in the anisotropic case that every element of ${\rm SL}_1(Q)$ is a product of two squares if and only if $-1$ is a square in ${\rm SL}_1(Q)$. This is proved in Theorem~\ref{waring-type-main-result}. Additionally, in Theorem~~\ref{waring-type-main-result}
we also discuss results concerning products of higher powers in ${\rm SL}_2(k)$ and ${\rm SL}_1(Q)$.

{\bf Acknowledgment:} The authors would like to thank B. Sury, Indian Statistical Institute Bangalore, for his encouragement during this work. We thank the referees for helpful comments which improved the readability of this article.

\section{The groups of type $A_1$}
Let $k$ be a field of characteristic $\neq 2$. In this section, we set notation to be followed later. The groups of type $A_1$ over $k$ are forms of the algebraic group ${\rm SL}_2$ over $k$. The $k$-forms of ${\rm SL}_2$ are in one-one correspondence with the quaternion algebras over $k$. In general, the $k$ points of ${\rm SL}_2$ is given by ${\rm SL}_{Q}$ (the set of reduced norm $1$ elements of $Q$) where $Q$ is a quaternion algebra over $k$. The $k$ points of split form corresponds to the matrix algebra $M_2(k)$ and is denoted as ${\rm SL}_{2,k}$ and that of anisotropic form corresponds to a quaternion division algebra $Q$ denoted as ${\rm SL}_{1,Q}$. To set the notation for what follows we describe each case briefly. 

\subsection{The split form ${\rm SL}_{2,k}$}\label{split-intro}
First, we introduce the split form of the algebraic group ${\rm SL}_2$ over $k$. In what follows we simply write $G, B, T$ to be the $k$-rational points of ${\rm SL}_2$ (simply denoted as ${\rm SL}_2(k)$), the upper triangular Borel and the diagonal maximal torus respectively for the convenience of notation. We fix these notation following the theory of Chevalley groups as in~\cite{ca}. The set $B=\left\{\begin{pmatrix} a&b\\ & a^{-1} \end{pmatrix}\mid a\in k^*, b\in k\right\}$ consisting of upper triangular matrices is said to be the standard Borel subgroup of ${\rm SL}_2(k)$. The set of all diagonals is a maximal torus and is denoted as $T$. We denote the diagonal matrices as $\h(a)=\diag(a, a^{-1})$ for $a\in k^*$, and the root generators as $\x_{12}(t)=\begin{pmatrix} 1&t\\ &1  \end{pmatrix}$ and $\x_{21}(t)= \begin{pmatrix} 1& \\ t&1  \end{pmatrix}$ for $t\in k$. Then (see Lemma 6.1.1 in~\cite{ca}), the group ${\rm SL}_2(k)$ is generated by the set of all root generators $\{\x_{12}(t_1), \x_{21}(t_2) \mid t_1, t_2 \in k\}$. Define, $\n(\alpha):=\x_{12}(\alpha)\x_{21}(-\alpha^{-1})\x_{12}(\alpha) = \begin{pmatrix} & \alpha \\ -\alpha^{-1} & \end{pmatrix}$ for $\alpha\neq 0$ and note that $\h(a) = \n(a)\n(-1)$. To simplify the notation we denote the Weyl group element $\n(1)=\begin{pmatrix} & 1 \\ -1 &\end{pmatrix}$ simply by $\
n$. The Bruhat decomposition is the double coset decomposition of the group ${\rm SL}_2(k)$ with respect to the subgroup $B$, is ${\rm SL}_2(k)=B\bigsqcup B\n B$. The double cosets have group structure induced by the Weyl group. In this case, the Weyl group is simply $\mathbb Z/2\mathbb Z$. Every element of $B$ can be written uniquely as $\h(a)\x_{12}(s)$, where $a\in k^*, s\in k$. And, every element of $B\n B$ has unique expression as $\x_{12}(t) \n(a)\x_{12}(s)$ for some $t,s\in k$ and $a\in k^*$. For the computations later, we need several commuting relations among the elements we have defined earlier. We list them here. 
\begin{proposition}\label{commuting-formulae}
With the notation as above,
\begin{enumerate}
\item $\h(ab)=\h(a)\h(b)$, $\x_{12}(t_1+t_2)=\x_{12}(t_1)+\x_{12}(t_2)$, $\x_{21}(t_1+t_2)=\x_{21}(t_1)+\x_{21}(t_2)$ and $\n(\alpha) \n(\beta)= -\h(\alpha\beta^{-1})$.
\item $\h(a) \x_{12}(t)= \x_{12}(a^2t) \h(a)$,  $\x_{12}(t)\h(a) = \h(a)\x_{12}(a^{-2}t)$, $\h(a) \x_{21}(t)= \x_{21}(a^{-2}t) \h(a)$, $\h(a) \n(\alpha)= \n(a\alpha) = \n(a^2\alpha) \h(a)$.
\item $\n(\alpha) \x_{12}(t)= \x_{21}(-\alpha^{-2}t) \n(\alpha)$.
\item $\n(a) \x_{12}(t) \n(a) = \x_{12}(-a^{2}t^{-1}) \n(-a^{2}t^{-1}) \x_{12}(-a^{2}t^{-1})$.
\end{enumerate}
\end{proposition}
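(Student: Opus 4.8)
The whole proposition is a collection of identities between words in $\h,\x_{12},\x_{21},\n$, and my plan is simply to evaluate both sides as $2\times 2$ matrices and compare. So the first step is to fix the explicit shapes $\h(a)=\begin{pmatrix} a & 0 \\ 0 & a^{-1}\end{pmatrix}$, $\x_{12}(t)=\begin{pmatrix} 1 & t \\ 0 & 1\end{pmatrix}$, $\x_{21}(t)=\begin{pmatrix} 1 & 0 \\ t & 1\end{pmatrix}$, $\n(\alpha)=\begin{pmatrix} 0 & \alpha \\ -\alpha^{-1} & 0\end{pmatrix}$; then each claimed equality becomes an equality of two products of such matrices, read off entry by entry. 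Nothing beyond associativity of matrix multiplication and arithmetic in $k^{*}$ intervenes, and the hypothesis $char(k)\neq 2$ is not used here.

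Next I would dispatch parts (1)--(3), which are immediate. For (1): multiplying two diagonal matrices multiplies entries, giving $\h(a)\h(b)=\h(ab)$; multiplying two upper, resp.\ lower, unitriangular matrices adds off-diagonal entries, which is the product law for $\x_{12}$ and $\x_{21}$ (the ``$+$'' in the display being the group product); and one multiplication gives $\n(\alpha)\n(\beta)=\begin{pmatrix} -\alpha\beta^{-1} & 0 \\ 0 & -\alpha^{-1}\beta\end{pmatrix}=-\h(\alpha\beta^{-1})$. For (2): conjugating a unitriangular matrix by $\h(a)$ rescales its one off-diagonal entry by $a^{2}$ or $a^{-2}$ depending on position, which gives the first three relations after moving the relevant $\h(a)$ to the other side; and $\h(a)\n(\alpha)=\begin{pmatrix} 0 & a\alpha \\ -a^{-1}\alpha^{-1} & 0\end{pmatrix}$, which by inspection equals both $\n(a\alpha)$ and $\n(a^{2}\alpha)\h(a)$. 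For (3): both $\n(\alpha)\x_{12}(t)$ and $\x_{21}(-\alpha^{-2}t)\n(\alpha)$ compute to $\begin{pmatrix} 0 & \alpha \\ -\alpha^{-1} & -\alpha^{-1}t\end{pmatrix}$.

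Part (4) is the only one that is not entirely mechanical, though it is still routine. I would first use part (2) to absorb the outer factors, verifying $\n(a)\,\x_{12}(t)\,\n(a)=\n\,\x_{12}(a^{-2}t)\,\n$; this reduces (4) to its case $a=1$, namely $\n\,\x_{12}(s)\,\n=\x_{12}(-s^{-1})\,\n(-s^{-1})\,\x_{12}(-s^{-1})$, whose two sides both evaluate to $\begin{pmatrix} -1 & 0 \\ s & -1\end{pmatrix}$. Alternatively one can bypass the reduction and expand $\n(a)\x_{12}(t)\n(a)$ and the right-hand side of (4) directly, both giving $\begin{pmatrix} -1 & 0 \\ a^{-2}t & -1\end{pmatrix}$. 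Thus there is no genuine obstacle in this proposition; the only thing requiring care is the bookkeeping of the signs and of the exponents $a^{\pm 2}$, and one can cross-check the computations on small groups using \cite{gap}, as the authors do for other parts of the paper.
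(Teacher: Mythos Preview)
Your proposal is correct and is exactly the kind of verification the paper has in mind: the paper does not actually write out a proof of this proposition, remarking beforehand that ``most of the computations can be verified by hand,'' so a direct check of each identity via explicit $2\times 2$ matrix multiplication is precisely the intended argument. Your reduction of (4) to the case $a=1$ via (2) is a nice economy, though as you note the direct expansion works just as well.
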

\noindent We will make use of these results freely as and when required. For the convenience of reader we note down multiplication relations as well.
\begin{proposition}\label{coset-mult}
With the notation as above,
\begin{enumerate}
\item $\h(\alpha_1)\x_{12}(\psi_1). \h(\alpha_2)\x_{12}(\psi_2) = \h(\alpha_1\alpha_2)\x_{12}(\alpha_2^{-2}\psi_1+\psi_2)$.
\item $\h(\alpha_1)\x_{12}(\psi_1).\x_{12}(\tau_2)\n(\alpha_2)\x_{12}(\psi_2) = \x_{12}\left(\alpha_1^2(\psi_1+\tau_1)\right) \n(\alpha_1\alpha_2) \x_{12}(\psi_2)$.
\item  $\x_{12}(\tau_1)\n(\alpha_1)\x_{12}(\psi_1). \x_{12}(\tau_2)\n(\alpha_2)\x_{12}(\psi_2)\\  =  \begin{cases}\x_{12}\left( \tau_1 - \frac{\alpha_1^2}{\psi_1+\tau_2}\right) \n\left( -\frac{\alpha_1\alpha_2}{\psi_1+\tau_2}\right)\x_{12}\left(\psi_2- \frac{\alpha_2^2}{\psi_1+\tau_2} \right) \ \ when\ \psi_1+\tau_2\neq 0, \\
\h(-\frac{a_1}{a_2}) \x_{12}(\frac{a_2^2}{a_1^2}\tau_1 + \psi_2) \ \ when\ \psi_1+\tau_2 = 0.                                                                                                    \end{cases}$
\end{enumerate}
\end{proposition}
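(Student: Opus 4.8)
The plan is to derive each of the three identities by processing the left-hand word from the inside out: first fuse neighbouring root elements via $\x_{12}(s)\x_{12}(t)=\x_{12}(s+t)$, and then use the commuting relations collected in Proposition~\ref{commuting-formulae} to slide every torus factor $\h(\cdot)$ and every Weyl factor $\n(\cdot)$ into the Bruhat normal forms $\h(a)\x_{12}(s)$ or $\x_{12}(t)\n(a)\x_{12}(s)$. Each identity can equally be checked by multiplying the explicit $2\times 2$ matrices of the previous section and recognising the product; I would keep that only as a sanity check, since the relation bookkeeping is shorter and is the form the later sections reuse. (Modulo the evident typographical slips --- the argument on the right of~(2) should read $\psi_1+\tau_2$, and the $a_i$ in the $\psi_1+\tau_2=0$ branch of~(3) are the $\alpha_i$ --- this is what comes out.)

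Parts~(1) and~(2) are immediate. For~(1): in $\h(\alpha_1)\x_{12}(\psi_1)\h(\alpha_2)\x_{12}(\psi_2)$ move $\h(\alpha_2)$ to the left past $\x_{12}(\psi_1)$ using $\x_{12}(t)\h(a)=\h(a)\x_{12}(a^{-2}t)$, then combine $\h(\alpha_1)\h(\alpha_2)=\h(\alpha_1\alpha_2)$ and fuse the two $\x_{12}$'s. For~(2): fuse the inner pair $\x_{12}(\psi_1)\x_{12}(\tau_2)=\x_{12}(\psi_1+\tau_2)$, slide $\h(\alpha_1)$ rightward by $\h(a)\x_{12}(t)=\x_{12}(a^2t)\h(a)$, and absorb $\h(\alpha_1)\n(\alpha_2)=\n(\alpha_1\alpha_2)$, landing on $\x_{12}(\alpha_1^2(\psi_1+\tau_2))\n(\alpha_1\alpha_2)\x_{12}(\psi_2)$.

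The content is in~(3). After the harmless fusion $\x_{12}(\psi_1)\x_{12}(\tau_2)=\x_{12}(u)$ with $u:=\psi_1+\tau_2$, the product is $\x_{12}(\tau_1)\,\n(\alpha_1)\,\x_{12}(u)\,\n(\alpha_2)\,\x_{12}(\psi_2)$, and the stated case split on $u$ is exactly the split on whether the two Weyl factors are separated by a non-trivial unipotent. If $u=0$ then $\x_{12}(u)=I$ and the Weyl factors multiply out: $\n(\alpha_1)\n(\alpha_2)=-\h(\alpha_1\alpha_2^{-1})=\h(-\alpha_1\alpha_2^{-1})$ (using $\h(-c)=-\h(c)$); sliding $\h(-\alpha_1\alpha_2^{-1})$ to the left past $\x_{12}(\tau_1)$ via $\x_{12}(t)\h(b)=\h(b)\x_{12}(b^{-2}t)$ and fusing gives $\h(-\alpha_1\alpha_2^{-1})\x_{12}(\alpha_2^2\alpha_1^{-2}\tau_1+\psi_2)$, the second branch. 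If $u\neq 0$, the decisive move is to \emph{homogenise} the two Weyl parameters before invoking the last relation of Proposition~\ref{commuting-formulae}: write $\n(\alpha_1)=\h(\alpha_1\alpha_2^{-1})\n(\alpha_2)$ (from $\h(a)\n(\alpha)=\n(a\alpha)$), so the middle block becomes $\h(\alpha_1\alpha_2^{-1})\bigl(\n(\alpha_2)\x_{12}(u)\n(\alpha_2)\bigr)$; then $\n(a)\x_{12}(t)\n(a)=\x_{12}(-a^2t^{-1})\n(-a^2t^{-1})\x_{12}(-a^2t^{-1})$ with $a=\alpha_2$, $t=u$ rewrites the bracket, and carrying $\h(\alpha_1\alpha_2^{-1})$ to the right --- past the first $\x_{12}$, which rescales its argument $-\alpha_2^2u^{-1}$ to $-\alpha_1^2u^{-1}$ via $\h(a)\x_{12}(t)=\x_{12}(a^2t)\h(a)$, and then into the Weyl factor, turning $-\alpha_2^2u^{-1}$ into $-\alpha_1\alpha_2u^{-1}$ via $\h(a)\n(\alpha)=\n(a\alpha)$ with nothing left over --- produces $\x_{12}(-\alpha_1^2u^{-1})\n(-\alpha_1\alpha_2u^{-1})\x_{12}(-\alpha_2^2u^{-1})$. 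Reattaching the outer $\x_{12}(\tau_1)$ and $\x_{12}(\psi_2)$ and fusing yields the first branch.

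I expect the homogenisation inside the $u\neq 0$ case of~(3) to be the one place needing care: the relation $\n(a)\x_{12}(t)\n(a)=\cdots$ is stated for equal Weyl parameters, so one must first factor off the discrepancy $\h(\alpha_1\alpha_2^{-1})$ and then keep precise track of how that torus element rescales each successive argument as it migrates to the right. Everything else is routine manipulation with Proposition~\ref{commuting-formulae}. As a closing consistency check, in the two branches the $(2,1)$-entry of the product comes out to $u/(\alpha_1\alpha_2)$ and $0$ respectively, so the first branch lies in $B\n B$ and the second in $B$, in accordance with the Bruhat decomposition.
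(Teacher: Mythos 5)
Your proof is correct and is essentially the verification the paper intends: the proposition is stated there without proof, but your manipulations (fusing root elements, writing $\n(\alpha_1)=\h(\alpha_1\alpha_2^{-1})\n(\alpha_2)$, applying $\n(a)\x_{12}(t)\n(a)=\x_{12}(-a^2t^{-1})\n(-a^2t^{-1})\x_{12}(-a^2t^{-1})$, and sliding the torus factor rightward) are exactly the steps the paper itself carries out in the proof of Proposition~\ref{powerformula}. Your reading of the typographical slips ($\tau_2$ for $\tau_1$ in (2), $\alpha_i$ for $a_i$ in the $\psi_1+\tau_2=0$ branch of (3)) is also correct.
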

\noindent We also follow the convention that the scalars which are used for $\n()$ and $\h()$ (such as $a, \alpha$) are invertible in $k$ and the elements which are used for $\x_{12}()$, such as $s,t, \tau, \psi$ are in $k$.

\subsection{The anisotropic form ${\rm SL}_{1,Q}$}\label{anisotropic-intro}
In this paper we reserve the notation $Q$ to denote a quaternion division algebra. Once again, for convenience of notation, we denote the $k$ points of anisotropic form of ${\rm SL}_2$, simply by ${\rm SL}_1(Q)$ which is the set of norm $1$ elements of $Q$.  
We describe the doubling construction here (see~\cite[\S 1.5]{sv}) and follow this notation in the following sections. 

Let $K$ be a degree $2$ field extension of $k$. We write $K=k(\zeta)$ where $\zeta^2=a\in k^*$. 
Let $x \to \bar{x}$ be the non-trivial $k$-automorphism of $K$ induced by $\zeta \mapsto -\zeta$.
Let $\lambda \in k^*$ be such that $\lambda \not\in N(K^*)$. Here $N : K^* \to k^*$ denotes the norm map for quadratic extension given by 
$N(x) = x\bar{x}$. Then, the quaternion algebra $Q=K\oplus K$ with multiplication defined by
$$(x_1,y_1).(x_2,y_2) = (x_1x_2+\lambda y_1\bar y_2, x_1y_2+\bar x_2y_1)$$
is a division algebra. This quaternion algebra is denoted by $Q = \left(\frac{a,\lambda}{k}\right)$. The conjugation involution in $Q$ is given by $\overline{(x,y)}=\bar x - y$ and norm is given by the formula $N((x,y))=N(x)-\lambda N(y)$ (see Proposition 1.5.1 of~\cite{sv}). We remark that, we use same notation for conjugation and norm in $Q$ as we do over the field $K$, while the purpose is clear from the context. The group 
$\{(x,y)\in Q : N((x,y))=1\}$ of norm $1$ elements in $Q$ is denoted by  ${\rm SL}_1(Q)$. We embed $Q$ into $M_2(K)$ as follows:
$$(x,y)\mapsto \begin{pmatrix} x & \lambda y \\ \bar y & \bar x \end{pmatrix}$$
and, the quaternion multiplication (trace and norm respectively) becomes matrix multiplication (trace and determinant respectively). We can further think of ${\rm SL}_1(Q) = ({\rm SL}_1(Q)\cap B) \bigsqcup ({\rm SL}_1(Q)\cap B\n B)$ where $B$ is a Borel in ${\rm SL}_2(K)$. This allows us to use computations in split ${\rm SL}_2$ to be used in this case as well. Notice that $\begin{pmatrix} x & \lambda y \\ \bar y & \bar x \end{pmatrix} = \x_{12}\left(\frac{x}{\bar y}\right)\n\left(\frac{-1}{\bar y}\right) \x_{12}\left(\frac{\bar x}{\bar y}\right)$ for $y\neq 0$.

\subsection{Conjugacy classes in ${\rm SL}_2(\mathbb F_q)$}
This can be found in many textbooks on representation theory of finite groups. The size of the group is $q(q^2-1)$. In all, there are $q+4$ conjugacy classes. We list them below according to their types. 

{\bf Central classes:} The two elements $\pm 1$, represented as $\h(\pm 1)$, are in the centre and form distinct conjugacy classes. 

{\bf Split regular semisimple classes: } These are the diagonal matrices represented by elements $\h(a)$ with $a\neq \pm 1$. There are $\frac{q-3}{2}$ such conjugacy classes and each conjugacy class is of size $q(q+1)$.

{\bf Non-semisimple classes:} There are $4$ such conjugacy classes given by $\x_{12}(1)$, $\h(-1)\x_{12}(-1)$, $\x_{12}(\epsilon)$, $\h(-1)\x_{12}(-\epsilon)$ where $\epsilon$ is a fixed non-square in $\mathbb F_q$. The size of each conjugacy class is $\frac{(q-1)(q+1)}{2}$.

{\bf Anisotropic regular semisimple classes:} These conjugacy classes are given by the companion matrix of an irreducible polynomial $X^2- \delta X+1$ of degree $2$. Thus, these classes are represented by the companion matrix $\n(-1) \x_{12}(\delta)$ where $\delta\in \mathbb F_q$ satisfying $\delta^2 - 4$ a non-square in the field. There are $\frac{q-1}{2}$ such conjugacy classes and each one of them are of size $q(q-1)$. In Section~\ref{sec-counting}, we present a different perspective to look at these classes which is useful in counting. 

\section{Generalised Fibonacci polynomials and $n^{\rm th}$ power}\label{GFP}
In this section, we define Fibonacci polynomials in two variables which appear in our study later.  Denote $u_0(X,Y)=0$, $u_1(X,Y)=1$ and define recursively, 
$$u_r(X,Y)=Xu_{r-1}(X,Y) + Yu_{r-2}(X,Y).$$ 
Thus, $u_2(X,Y)=X$, $u_{3}(X,Y)=X^2+Y$, $u_4(X,Y)=X^3+2XY$ and so on. These polynomials have been studied in literature, for example, see~\cite{hl} and~\cite{acms}. We mention a few interesting properties keeping in mind that these polynomials are in $\mathbb Z[X,Y]$.
\begin{proposition}
With the notation as above,
\begin{enumerate}
\item For $n\geq 2$, $u_n \mid u_m$ if and only if $n\mid m$.
\item $(u_n, u_m) = u_{(n,m)}$. This implies, $(u_n, u_{n+1})=1$.
\item The polynomials $u_n(X,Y)$ is irreducible over $\mathbb Q$ if and only if $n$ is a prime.
\item $u_{m+n+1}= u_{m+1}u_{n+1} + Yu_m u_n$.
\end{enumerate}
\end{proposition}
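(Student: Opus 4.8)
The plan is to prove (4) first, deduce (2) and (1) from it by a gcd (Euclidean) argument, and treat (3) last; the only genuinely hard point will be the ``prime $\Rightarrow$ irreducible'' direction of (3). For (4), I would either induct on $n$ with $m$ fixed (checking $n=0,1$ and using the defining recursion in the inductive step), or, more slickly, verify by induction that $M^r=\left(\begin{smallmatrix} u_{r+1} & Yu_r \\ u_r & Yu_{r-1}\end{smallmatrix}\right)$ for $M=\left(\begin{smallmatrix} X & Y \\ 1 & 0\end{smallmatrix}\right)$, and then read off the $(1,1)$-entry of $M^{m+n}=M^mM^n$ to get $u_{m+n+1}=u_{m+1}u_{n+1}+Yu_m u_n$. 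Throughout I record that each $u_r$ is monic in $X$ of degree $r-1$, so that in the UFD $\mathbb{Q}[X,Y]$ I may take all gcd's monic in $X$ and ``associate'' means ``equal''.

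Replacing $n$ by $n-1$ in (4) gives the addition formula $u_{a+b}=u_{a+1}u_b+Yu_a u_{b-1}$. Since $u_n(X,0)=X^{n-1}\neq 0$, we have $\gcd(u_n,Y)=1$, and then $u_{n+1}=Xu_n+Yu_{n-1}$ yields $\gcd(u_n,u_{n+1})=\gcd(u_n,Yu_{n-1})=\gcd(u_n,u_{n-1})$, hence by descent $\gcd(u_n,u_{n+1})=\gcd(u_1,u_0)=1$. Feeding this into the addition formula gives $\gcd(u_{a+b},u_b)=\gcd(Yu_a u_{b-1},u_b)=\gcd(u_a,u_b)$, i.e. $\gcd(u_m,u_n)=\gcd(u_{m-n},u_n)$ for $m\ge n$; running the subtractive Euclidean algorithm on the indices and using $\gcd(u_d,u_0)=\gcd(u_d,0)=u_d$ proves (2). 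For (1): $u_n\mid u_m$ iff $\gcd(u_n,u_m)=u_n$ iff $u_{\gcd(m,n)}=u_n$ by (2), and since $\deg_X u_r=r-1$ is strictly increasing this forces $\gcd(m,n)=n$, i.e. $n\mid m$ (the hypothesis $n\ge 2$ merely removes the degenerate cases).

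For (3), the ``composite $\Rightarrow$ reducible'' direction is immediate: if $n=ab$ with $2\le a<n$ then (1) gives $u_a\mid u_n$ with $1\le\deg_X u_a=a-1<n-1=\deg_X u_n$, a proper nontrivial factorization. The converse is the main obstacle. The case $p=2$ is trivial ($u_2=X$). For an odd prime $p$, I would let $\alpha,\beta$ be the roots of $t^2-Xt-Y$ (so $\alpha+\beta=X$, $\alpha\beta=-Y$, $\alpha^2+\beta^2=X^2+2Y$) and write $u_p=\frac{\alpha^p-\beta^p}{\alpha-\beta}=\prod_{\zeta^p=1,\ \zeta\neq 1}(\alpha-\zeta\beta)$; pairing $\zeta$ with $\zeta^{-1}$ gives, over $\overline{\mathbb{Q}}$, the factorization $u_p(X,Y)=\prod_{j=1}^{(p-1)/2}\bigl(X^2+(s_j+2)Y\bigr)$, where $s_1,\dots,s_{(p-1)/2}$ are the conjugates of $\zeta_p+\zeta_p^{-1}=2\cos(2\pi/p)$, i.e. the roots of its (irreducible, degree $(p-1)/2$) minimal polynomial. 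Each factor $X^2+(s_j+2)Y$ is irreducible over $\mathbb{Q}(\zeta_p)$ (it is linear in $Y$, with $s_j+2=(\zeta^{1/2}+\zeta^{-1/2})^2\neq 0$ and coefficients coprime in $X$), and $\mathrm{Gal}(\mathbb{Q}(\zeta_p)/\mathbb{Q})$ permutes these factors transitively via its transitive action on the $s_j$. Hence any divisor of $u_p$ lying in $\mathbb{Q}[X,Y]$ is a Galois-stable product of these irreducibles, so is a constant or all of $u_p$, proving irreducibility. (Alternatively, one may quote the analogous statements for Fibonacci polynomials from \cite{hl} and \cite{acms}.) Everything except this last Galois-descent step is a formal consequence of the addition formula together with the monic, degree-$(r-1)$ structure of the $u_r$.
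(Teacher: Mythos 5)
Your proof is correct; the issue is only that there is nothing in the paper to compare it against: the authors state this proposition as known background, pointing to the literature on generalised Fibonacci polynomials (\cite{hl}, \cite{acms}), and immediately after it they merely record the Binet-type identity $u_n=(\phi^n-\psi^n)/(\phi-\psi)$. So your argument is best described as a self-contained reconstruction of the standard proofs in those references rather than a different route from the paper. Its ingredients are exactly the classical ones: the matrix identity $M^r=\left(\begin{smallmatrix} u_{r+1} & Yu_r\\ u_r & Yu_{r-1}\end{smallmatrix}\right)$ yielding the addition formula (4); the descent $\gcd(u_m,u_n)=\gcd(u_{m-n},u_n)$, based on $\gcd(u_n,Y)=1$ and $\gcd(u_n,u_{n+1})=1$, yielding (2) and then (1) from the strict growth of $\deg_X u_r$; and, for the prime case of (3), the factorization $u_p=\prod_{j}\bigl(X^2+(s_j+2)Y\bigr)$ coming from the same $\phi,\psi$ the paper introduces, followed by Galois descent using transitivity on the conjugates $s_j$ of $\zeta_p+\zeta_p^{-1}$ (this is the Hoggatt--Long style argument). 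Two small points are worth making explicit in a final write-up: the factors $X^2+(s_j+2)Y$ are pairwise non-associate because the $s_j$ are distinct, so unique factorization really does force any divisor in $\mathbb{Q}[X,Y]$ to be, up to a rational scalar, a Galois-stable subproduct and hence trivial; and in (1) the implication $n\mid m\Rightarrow u_n\mid u_m$ is also supplied by (2), as your ``iff'' chain tacitly assumes. What your version buys is self-containedness; what the paper buys by citing \cite{hl} and \cite{acms} is brevity, since the proposition serves only as background for the homogenised polynomials $f_n(X,Y)=u_{n+1}(X,-Y^2)$ actually used later.
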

\noindent If we put $\phi(X,Y)= \frac{X+\sqrt{X^2+4Y}}{2}$ and $\psi(X,Y)= \frac{X-\sqrt{X^2+4Y}}{2}$ then $$u_n = \frac{\phi^n - \psi^n}{\phi - \psi}.$$

In our study, we come across the homogeneous version of these polynomials obtained as follows. Define $f_n(X,Y)=u_{n+1}(X,-Y^2)$. Therefore, $f_{-1}(X,Y)=0$, $f_0(X,Y)=1$ and recursively,  
$$f_r(X,Y)=Xf_{r-1}(X,Y) - Y^2f_{r-2}(X,Y).$$  
For example, $f_1=X$, $f_2=X^2-Y^2$, $f_3=X^3-2XY^2$, $f_4=X^4-3X^2Y^2+Y^4$, $f_5=X^5-4X^3Y^2+3XY^4$, $f_6= X^6-5X^4Y^2 + 6X^2Y^4-Y^6$ and so on. Clearly all these polynomials belong to $\mathbb Z[X,Y]$ and are homogeneous of degree $r$. In this paper, we refer to these polynomials $f_n(X,Y)$ as generalised Fibonacci polynomials. 

We compute a formula for $x^n$ when $x\in B\n B$. This formula is, interestingly, related to generalised Fibonacci polynomials $f_n(X,Y)$ defined above. We formulate this as a recursive relation in the generic case when the powers are not in $B$. 
\begin{proposition}\label{powerformula}
Let $x= \x_{12}(t) \n(a) \x_{12}(s)$ be in $B\n B$ and suppose $x^i$ belongs to $B\n B$ for all $1\leq i \leq n$. Let us denote $x^i=\x_{12}(t_i) \n(a_i) \x_{12}(s_i)$ where $t_1=t, a_1=a $ and $s_1=s$. Then, $x^n = \x_{12}(t_n) \n(a_n) \x_{12}(s_n)$ where,
$$t_n=t - \frac{a^2}{t + s_{n-1}}, s_n =s - \frac{a^2}{t + s_{n-1}}, a_n = -\frac{a_{n-1}a}{t + s_{n-1}}.$$
\end{proposition}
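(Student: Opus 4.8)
The plan is to prove Proposition~\ref{powerformula} by induction on $n$, using the multiplication formula in part (3) of the previous Proposition as the engine. The base case $n=1$ is just the definition of $t_1, a_1, s_1$. For the inductive step, I would write $x^n = x^{n-1}\cdot x = \big(\x_{12}(t_{n-1})\n(a_{n-1})\x_{12}(s_{n-1})\big)\cdot\big(\x_{12}(t)\n(a)\x_{12}(s)\big)$ and apply the product formula for two elements of $B\n B$. The hypothesis that $x^i \in B\n B$ for all $1 \le i \le n$ is exactly what guarantees we land in the first case of that case-split, i.e. that the relevant ``middle'' quantity $s_{n-1}+t$ is nonzero, so the formula
\[
\x_{12}\!\left(t_{n-1} - \frac{a_{n-1}^2}{s_{n-1}+t}\right)\n\!\left(-\frac{a_{n-1}a}{s_{n-1}+t}\right)\x_{12}\!\left(s - \frac{a^2}{s_{n-1}+t}\right)
\]
applies directly. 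Reading off the three coordinates gives candidate values $t_n' = t_{n-1} - a_{n-1}^2/(s_{n-1}+t)$, $a_n' = -a_{n-1}a/(s_{n-1}+t)$, $s_n' = s - a^2/(s_{n-1}+t)$.

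The only gap between these and the asserted formulas is that the claimed expressions read $t_n = t - a^2/(t+s_{n-1})$ and $a_n = -a_{n-1}a/(t+s_{n-1})$, so the $a_n$ and $s_n$ formulas match on the nose, but the $t_n$ formula is stated with $t$ and $a^2$ in place of $t_{n-1}$ and $a_{n-1}^2$. To reconcile these I would instead associate the product the other way, $x^n = x\cdot x^{n-1}$, and apply the same product formula to $\big(\x_{12}(t)\n(a)\x_{12}(s)\big)\cdot\big(\x_{12}(t_{n-1})\n(a_{n-1})\x_{12}(s_{n-1})\big)$; now the ``middle'' quantity is $s + t_{n-1}$, and I would need the auxiliary fact that $s_i + t = t_i + s$ for all $i$ (equivalently $t_i - s_i = t - s$, which is immediate from the recursion since $t_i$ and $s_i$ are obtained from $t_{i-1}$-data by subtracting the \emph{same} quantity $a^2/(t+s_{i-1})$, so $t_i - s_i$ is constant $= t - s$). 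Granting $s_{n-1}+t = t_{n-1}+s$, the two associations give the same denominators, the first association yields the stated $a_n, s_n$, and the second yields the stated $t_n$; combining the two presentations proves the proposition.

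I would organize the write-up as: (i) observe $t_i - s_i = t - s$ for all $i$ by a one-line induction on the recursion; (ii) state that $x^i \in B\n B$ forces $s_{i-1}+t \ne 0$, so part~(3) of the multiplication proposition is in its first (generic) branch at each stage; (iii) carry out the inductive step via $x^{n} = x^{n-1}x$ to get $a_n, s_n$ and via $x^n = x\,x^{n-1}$ (using (i) to rewrite the denominator) to get $t_n$. The main obstacle is really bookkeeping rather than mathematics: making sure the associativity argument is clean and that the nonvanishing of $s_{i-1}+t$ is correctly extracted from the hypothesis ``$x^i \in B\n B$'' — since if $s_{i-1}+t$ vanished at some stage the product would fall into the second branch and land in $B$, contradicting the assumption. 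No deeper input (in particular, no Fibonacci polynomial identity) is needed for this proposition; the connection to $f_r(X,Y)$ emerges only when one solves the recursion in closed form, which is presumably done in the following section.
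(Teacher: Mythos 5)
Your proposal is correct and follows essentially the same route as the paper: the paper likewise computes $x^{n-1}\cdot x$ (via the commuting relations, which is just an unpacked form of the product formula you invoke) to obtain the stated $a_n$ and $s_n$, and then obtains $t_n$ "by symmetry" from $x\cdot x^{n-1}$. Your explicit observation that $t_i-s_i=t-s$ (so $s+t_{n-1}=t+s_{n-1}$), together with the remark that $x^i\in B\n B$ rules out the degenerate branch, merely spells out the bookkeeping the paper leaves implicit.
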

\begin{proof} We compute,
\begin{eqnarray*}
&& x^n= x^{n-1}.x = \x_{12}(t_{n-1}) \n(a_{n-1}) \x_{12}(s_{n-1}) . \x_{12}(t) \n(a) \x_{12}(s) \\ 
&=& \x_{12}(t_{n-1}).\n(a_{n-1}).\x_{12}(s_{n-1}+t) \n(a) \x_{12}(s)\\
&=& \x_{12}(t_{n-1}) \h\left(\frac{a_{n-1}}{a}\right) . \n(a) \x_{12}(s_{n-1}+ t) \n(a).\x_{12}(s)\\
&=& \x_{12}(t_{n-1})\h\left(\frac{a_{n-1}}{a}\right) . \x_{12}\left(-\frac{a^2}{t+s_{n-1}}\right) \n\left(-\frac{a^2}{t+ s_{n-1}} \right) \x_{12}\left(-\frac{a^2}{t+ s_{n-1}}\right). \x_{12}(s) \\
&=&  \x_{12}(t_{n-1})\h\left(\frac{a_{n-1}}{a}\right) . \x_{12}\left(-\frac{a^2}{t+ s_{n-1}}\right) \n\left( - \frac{a^2}{t+ s_{n-1}} \right) \x_{12}\left(s - \frac{a^2}{t + s_{n-1}}\right)\\
&=& \x_{12}(t_{n-1}) \x_{12}\left(-\frac{a_{n-1}^2a^2}{a^2(t + s_{n-1})}\right) \h\left(\frac{a_{n-1}}{a}\right) \n\left(- \frac{a^2}{t + s_{n-1}} \right) \x_{12}\left( s-\frac{a^2}{t + s_{n-1}}\right)\\
&=& \x_{12}\left(t_{n-1} - \frac{a_{n-1}^2}{t + s_{n-1}}\right)   \n\left(-\frac{a_{n-1}a}{t + s_{n-1}}\right ) \x_{12}\left(s-\frac{a^2}{t + s_{n-1}}\right).
\end{eqnarray*}
The first formula follows by symmetry of computation if we compute $x.x^{n-1}$.
\end{proof}
\noindent Now we rewrite these formulae involving generalised Fibonacci polynomials. Appearance of these polynomials in computing powers is well known (for example, see~\cite{ms}). To begin with, $t+s=f_1(t+s,a)$. Now, note that $t+s_2= t + \left(s - \frac{a^2}{t+s} \right) = \frac{(t+s)^2-a^2}{t+s}$. Thus we get, $(t+s)(t+s_2)= (t+s)^2-a^2=f_2(t+s,a)$. More generally, we have the following,
\begin{lemma} With the notation as above, for $r\geq 1$, 
$$\prod_{i=1}^{r}(t+s_i)=(t+s) \prod_{i=1}^{r-1}(t+s_i) - a^2 \prod_{i=1}^{r-2}(t+s_i)=f_r(t+s,a).$$
\end{lemma}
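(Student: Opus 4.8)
The plan is to prove the two asserted equalities separately and then chain them. First I would establish the recursion
$\prod_{i=1}^{r}(t+s_i)=(t+s)\prod_{i=1}^{r-1}(t+s_i)-a^2\prod_{i=1}^{r-2}(t+s_i)$ directly from Proposition~\ref{powerformula}. From that proposition we have $s_i = s - \dfrac{a^2}{t+s_{i-1}}$ for $i\ge 2$ (and $s_1=s$), so $t+s_i = (t+s) - \dfrac{a^2}{t+s_{i-1}} = \dfrac{(t+s)(t+s_{i-1})-a^2}{t+s_{i-1}}$. Multiplying this identity for $i$ running from $2$ to $r$ telescopes the denominators against the numerators: the factor $(t+s_{i-1})$ in the denominator of the $i$-th term cancels the explicit $(t+s_{i-1})$ appearing as a factor of the product up to index $i-1$. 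Carrying this out yields
$\prod_{i=1}^{r}(t+s_i) = (t+s)\cdot\big[(t+s)\prod_{i=1}^{r-2}(t+s_i)-a^2\prod_{i=1}^{r-3}(t+s_i)\big]\cdots$; more cleanly, setting $P_r=\prod_{i=1}^{r}(t+s_i)$, the relation $t+s_r=\dfrac{(t+s)(t+s_{r-1})-a^2}{t+s_{r-1}}$ gives $P_r = \big((t+s)(t+s_{r-1})-a^2\big)\cdot\dfrac{P_{r-1}}{t+s_{r-1}} = (t+s)P_{r-1} - a^2 P_{r-2}$, using $P_{r-1}=(t+s_{r-1})P_{r-2}$. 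This is exactly the claimed three-term recursion.

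Next I would identify $P_r$ with $f_r(t+s,a)$ by induction on $r$, using the recursion just proved together with the defining recursion $f_r(X,Y)=Xf_{r-1}(X,Y)-Y^2 f_{r-2}(X,Y)$ from Section~\ref{GFP}. The base cases are checked by hand: $f_0=1=P_0$ (empty product), and $P_1 = t+s = f_1(t+s,a)$, as already noted before the lemma; the computation $P_2=(t+s)(t+s_2)=(t+s)^2-a^2=f_2(t+s,a)$ in the text confirms the pattern and can serve as an extra sanity check of the inductive step. For the inductive step, assume $P_{r-1}=f_{r-1}(t+s,a)$ and $P_{r-2}=f_{r-2}(t+s,a)$; then $P_r=(t+s)P_{r-1}-a^2P_{r-2}=(t+s)f_{r-1}(t+s,a)-a^2 f_{r-2}(t+s,a)=f_r(t+s,a)$, matching the recursion for $f_r$ with $X=t+s$, $Y=a$. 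This closes the induction and proves $\prod_{i=1}^{r}(t+s_i)=f_r(t+s,a)$.

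I do not expect a serious obstacle here; the only point requiring care is the bookkeeping in the telescoping step, specifically making sure the partial products $P_{r-1}$ and $P_{r-2}$ line up correctly with the indices appearing in the rewritten expression for $t+s_r$, and keeping the hypothesis of Proposition~\ref{powerformula} (that $x^i\in B\n B$, equivalently $t+s_{i-1}\neq 0$, for all relevant $i$) in force so that the divisions are legitimate. Everything else is a routine two-step induction: first convert the product recursion into the $f_r$-recursion, then match base cases.
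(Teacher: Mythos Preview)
Your proposal is correct and follows essentially the same approach as the paper: both arguments use the relation $t+s_r=(t+s)-\dfrac{a^2}{t+s_{r-1}}$ from Proposition~\ref{powerformula} to derive the three-term recursion $P_r=(t+s)P_{r-1}-a^2P_{r-2}$, and then identify $P_r$ with $f_r(t+s,a)$ via the matching recursion and base cases. The paper presents the recursion step by factoring $\prod_{i=1}^{r-1}(t+s_i)$ out of the right-hand side, while you go the other direction starting from $P_r=(t+s_r)P_{r-1}$; these are the same computation read in opposite directions.
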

\begin{proof}
We already noted that $t+s=f_1(t+s,a)$ and $(t+s_1)(t+s_2)=f_2(t+s,a)$. Thus, if we prove the recursive relation we would have established the second identity. We note that, for $r\geq 3$,
\begin{eqnarray*}
(t+s) \prod_{i=1}^{r-1}(t+s_i) - a^2 \prod_{i=1}^{r-2}(t+s_i) &=& \left(\prod_{i=1}^{r-1}(t+s_i)\right) \left( (t+s) - \frac{a^2}{t+s_{r-1}}\right) \\
 &=&  \left(\prod_{i=1}^{r-1}(t+s_i) \right) (t + s_r) =  \prod_{i=1}^{r}(t+s_i).
\end{eqnarray*}
\end{proof}
\noindent Thus, the formulae in Proposition~\ref{powerformula} can be re-written as follows: 
\begin{eqnarray}\label{powerformulaf} 
t_n &=& t - a^2\frac{f_{n-2}(t+s,a)}{f_{n-1}(t+s,a)},  \\ 
s_n &= & s - a^2\frac{f_{n-2}(t+s,a)}{f_{n-1}(t+s,a)}, \nonumber \\  
a_n  &=& (-1)^{n-1}\frac{a^n}{ f_{n-1}(t+s,a)} \nonumber
\end{eqnarray}
where the last equation is obtained inductively as follows
$$a_n= -\frac{a_{n-1}a}{t + s_{n-1}} = (-1)^2 \frac{a_{n-2}a^2}{(t + s_{n-2})(t + s_{n-1}) } = \cdots = (-1)^{n-1}\frac{a^n}{f_{n-1}(t+s,a)}.$$

Now we deal with the case when $x\in B\n B$. It can happen that a certain power $x^r$ is in $B$. We determine this in the following,
\begin{proposition}\label{bnbpowerb}
Let $x=\x_{12}(t) \n(a) \x_{12}(s)\in B\n B$. Then, 
\begin{enumerate}
\item $f_1(t+s, a), \ldots, f_{r-2}(t+s, a)$ are all non-zero and $f_{r-1}(t+s, a)=0$ if and only if $x, \ldots, x^{r-1} \in B\n B$ and $x^r\in B$. 
\item Let $r$ be the smallest power such that $x^r\in B$ (i.e., $x^{r-1}\notin B$). Then, $x^n\in B$ if and only if $r\mid n$. 
\end{enumerate}
\end{proposition}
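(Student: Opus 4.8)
The plan is to run a single induction on the exponent, feeding the multiplication law for two elements of $B\n B$ into the recursive description of powers in Proposition~\ref{powerformula} and its Fibonacci rewriting. The starting observation is the dichotomy coming from the Bruhat structure: for $y=\x_{12}(\tau_1)\n(\alpha_1)\x_{12}(\psi_1)$ and $z=\x_{12}(\tau_2)\n(\alpha_2)\x_{12}(\psi_2)$, the product $yz$ lies in $B\n B$ when $\psi_1+\tau_2\ne 0$ and in $B$ when $\psi_1+\tau_2=0$. Applying this with $z=x$: if $x^{i}\in B\n B$, say $x^{i}=\x_{12}(t_i)\n(a_i)\x_{12}(s_i)$, then $x^{i+1}=x^{i}\cdot x$ lies in $B\n B$ exactly when $t+s_i\ne 0$ and lies in $B$ exactly when $t+s_i=0$. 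Thus the moment $x$ escapes $B\n B$ is controlled entirely by the vanishing of the quantities $t+s_i$.

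For part (1) I would prove the following by induction on $j\ge 1$: the powers $x,x^2,\dots,x^{j}$ all lie in $B\n B$ if and only if $f_1(t+s,a),\dots,f_{j-1}(t+s,a)$ are all non-zero, and in that case $t+s_i=f_i(t+s,a)/f_{i-1}(t+s,a)$ for $1\le i\le j$. The base cases $j=1$ (the hypothesis of the proposition, with an empty condition on the $f$'s) and $j=2$ ($x^2\in B\n B\iff t+s=f_1(t+s,a)\ne 0$) are immediate. For the inductive step, assume $x,\dots,x^{j}\in B\n B$; then the dichotomy forces $t+s_1,\dots,t+s_{j-1}$ to be non-zero, so the product $\prod_{i=1}^{j-1}(t+s_i)=f_{j-1}(t+s,a)$ of the preceding lemma is non-zero, and combining with $\prod_{i=1}^{j}(t+s_i)=f_{j}(t+s,a)$ yields $t+s_{j}=f_{j}(t+s,a)/f_{j-1}(t+s,a)$; hence $x^{j+1}\in B\n B\iff t+s_j\ne 0\iff f_j(t+s,a)\ne 0$. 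Part (1) then falls out by taking $j=r-1$: the statement ``$x,\dots,x^{r-1}\in B\n B$ and $x^r\in B$'' becomes ``$f_1(t+s,a),\dots,f_{r-2}(t+s,a)$ non-zero'' together with $t+s_{r-1}=0$, and since $f_{r-2}(t+s,a)\ne 0$ the latter reads exactly $f_{r-1}(t+s,a)=0$.

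For part (2) I would drop the polynomials and argue group-theoretically. One direction is trivial: if $r\mid n$ then $x^n=(x^r)^{n/r}\in B$ since $B$ is a subgroup. Conversely, suppose $x^n\in B$ and write $n=qr+\ell$ with $0\le\ell<r$; then $x^{\ell}=(x^r)^{-q}x^n$ is a product of elements of $B$, hence lies in $B$. Since $r$ is the smallest positive integer with $x^r\in B$, we have $x^j\notin B$ for $1\le j\le r-1$, so $x^\ell\in B$ with $0\le\ell<r$ forces $\ell=0$, i.e. $r\mid n$.

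The only genuine subtlety, and the place demanding care, is the bookkeeping in the part (1) induction: one must check that the recursive formulas of Proposition~\ref{powerformula} and of the preceding lemma are invoked only in the range where every intermediate power actually lies in $B\n B$, so that no denominator $t+s_i$ has silently vanished and every division $f_i/f_{i-1}$ is legitimate. Once the equivalence ``$x,\dots,x^{j}\in B\n B\iff f_1(t+s,a),\dots,f_{j-1}(t+s,a)\ne 0$'' is packaged as this one inductive statement, both parts of the proposition follow with essentially no further computation.
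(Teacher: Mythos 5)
Your proposal is correct and follows essentially the same route as the paper: both arguments track membership of the powers $x^i$ in $B\n B$ through the non-vanishing of the quantities $t+s_i$, using Proposition~\ref{powerformula} together with the identity $\prod_{i=1}^{j}(t+s_i)=f_j(t+s,a)$, and part (2) is the same division-with-remainder argument. The only difference is organizational: you package part (1) as a single induction and conclude $x^r\in B$ directly from the Bruhat multiplication dichotomy ($t+s_{r-1}=0$), whereas the paper treats $r=2$ separately and computes $x^r$ explicitly (a formula it reuses later), so your streamlining loses nothing for this proposition.
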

\begin{proof} Let us prove (1) first.
Clearly if $f_{1}(t+s,a)$ is not zero, we can compute $x^2$ by the formula~\ref{powerformulaf} and, hence $x^2$ belongs to $B\n B$. Thus if $f_1(t+s,a), \ldots, f_{i-1}(t+s, a)$ all are non-zero, we can compute $x^i$ by the formula and hence it belongs to $B\n B$ not in $B$. For converse, let us assume, $f_1(t+s, a), \ldots, f_{r-2}(t+s, a)$ are all non-zero and $f_{r-1}(t+s, a)=0$. 

Let us verify this for $r=2$ first, i.e., we have $f_1(t+s,a)=t+s=0$. In this case, $x=\x_{12}(t) \n(a) \x_{12}(-t) = \x_{12}(t) \n(a) \x_{12}(t)^{-1}$ and $x^2= \x_{12}(t) \n(a)^2 \x_{12}(t)^{-1} = -1$. Thus, $x^2\in B$ and $x^n=(-1)^m$ if $n=2m$ and $x^n=(-1)^mx$ if $n=2m+1$. Thus $x^n\in B$ if and only if $n$ is even.

Now to prove the general case, we note that $0=f_{r-1}(t+s,a)= (t+s)f_{r-2}(t+s,a) - a^2 f_{r-3}(t+s,a)$ (Fibonacci relation), gives, $(t+s)f_{r-2}(t+s,a) = a^2 f_{r-3}(t+s,a)$. Thus, $x^{r-1}$ can be computed by the formula~\ref{powerformulaf} and $t_{r-1}= t - a^2\frac{f_{r-3}(t+s,a)}{f_{r-2}(t+s,a)} = t-a^2\frac{(t+s)}{a^2} = -s$, similarly, $s_{r-1}=-t$. Thus, 
\begin{eqnarray}
x^r &=& x.x^{r-1} =  \x_{12}(t) \n(a) \x_{12}(s) . \x_{12}(t_{r-1}) \n(a_{r-1}) \x_{12}(s_{r-1})  \\
&=& \x_{12}(t) \n(a) \x_{12}(s). \x_{12}(-s) \n(a_{r-1}) \x_{12}(-t) \nonumber \\
&=& \x_{12}(t) \n(a) \n(a_{r-1}) \x_{12}(-t) = \x_{12}(t) \h(-aa_{r-1}^{-1}) \x_{12}(-t) \nonumber \\
&=& \h\left(-\frac{a}{a_{r-1}}\right) \x_{12}\left( \left(\frac{a_{r-1}^2}{a^2}-1\right)t\right) \nonumber
\end{eqnarray}
which belongs to $B$.

To prove (2), if $r \mid n$ then $x^n=(x^r)^{\frac{n}{r}} \in B$. Now suppose $x^n\in B$. Since $r$ is smallest such that $x^r\in B$ and $x^i\not\in B$ for all $i$, with $1\leq i \leq r-1$, i.e., $x^i\in B\n B$. Now write $n=lr+m$ where $0\leq m \leq r-1$ and then $x^n=(x^r)^lx^m$. Thus, $x^n\in B$ if and only if $m=0$, which happens if and only if $r\mid n$.
\end{proof}
\noindent In the next section we use the above to determine if an element of ${\rm SL}_2(k)$ has $n^{\rm th}$ root.

\section{$n^{\rm th}$ root in ${\rm SL}_2(k)$}\label{sec-nthroot}
Let $g$ be an element of ${\rm SL}_2(k)$ where ${\rm char}(k)\neq 2$. We want to solve the equation $X^n=g$ in ${\rm SL}_2(k)$. We make two separate cases depending on if $g$ is in $B$ or in $B\n B$.

\subsection{$n^{\rm th}$ roots in Borel}
Let $g= \h(\alpha) \x_{12}(\psi)\in B$ and let $x\in {\rm SL}_2(k)$ be a solution of $X^n=g$. We make two cases separately depending on if $x$ is in $B$ or $B\n B$. For an element $\alpha\in k$, define the following polynomials, $S_{2m}(\alpha, X) = (1+\alpha)(1+X^2+X^4+\cdots + X^{2(m-1)})$ and $S_{2m+1}(\alpha, X)=1+\alpha X + X^2+\alpha X^3 + X^4+\cdots + \alpha X^{2m-1}+ X^{2m}$. We remark that $S_{2m}(-1, X)=0$.
\begin{proposition}\label{n-th-powers-in-Borels}
Let $g= \h(\alpha) \x_{12}(\psi)\in B$. Then,  
\begin{enumerate}
\item for $\alpha=\pm 1$, the equation $x^n=g$ has a solution $x\in B$ if, and only if, the equations $X^n =\alpha$ and $S_{n}(\alpha, X)Y- \psi X^{2(n-1)}=0$ have simultaneous solution  for $X,Y$ in $k$.  
\item For $\alpha\neq \pm 1$, the equation $x^n=g$ has a solution $x\in B$ if, and only if, the equation $X^n =\alpha$ has a solution $X\in k$.
\end{enumerate}
\end{proposition}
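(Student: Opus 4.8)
The plan is to compute $x^n$ explicitly for $x \in B$ and then compare with $g = \h(\alpha)\x_{12}(\psi)$ entry by entry. Writing $x = \h(a)\x_{12}(s)$, the multiplication formula in Proposition 2.6(1) gives a clean product rule: $\h(a_1)\x_{12}(s_1)\cdot \h(a_2)\x_{12}(s_2) = \h(a_1 a_2)\x_{12}(a_2^{-2}s_1 + s_2)$. Iterating this, I would show by induction that
\begin{equation*}
x^n = \h(a)^n \x_{12}\!\left( s\,(a^{-2(n-1)} + a^{-2(n-2)} + \cdots + a^{-2} + 1)\right) = \h(a^n)\,\x_{12}\!\left( s\,\sum_{j=0}^{n-1} a^{-2j}\right).
\end{equation*}
So $X^n = g$ with $x \in B$ is equivalent to the two scalar equations $a^n = \alpha$ and $s\sum_{j=0}^{n-1} a^{-2j} = \psi$. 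Clearing denominators by multiplying the second equation through by $a^{2(n-1)}$ turns the geometric sum into $a^{2(n-1)} + a^{2(n-2)} + \cdots + a^2 + 1$, i.e. $\sum_{j=0}^{n-1} a^{2j}$, and gives $s\sum_{j=0}^{n-1}a^{2j} = \psi\, a^{2(n-1)}$.

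Next I would identify $\sum_{j=0}^{n-1} a^{2j}$ with the polynomial $S_n(\alpha, X)$ evaluated at $X = a$, using that $a^n = \alpha$. When $n = 2m$, pairing consecutive terms gives $\sum_{j=0}^{2m-1} a^{2j} = (1 + a^{2m})(1 + a^4 + \cdots + a^{4(m-1)})$; since $a^{2m} = \alpha^2/\!\cdots$ — more carefully, $a^{2m}$ is not $\alpha$ in general, so I would instead note $1 + a^{2m} = 1 + (a^m)^2$, and since $a^n = a^{2m} = \alpha$ only when... — here I must be careful: $a^n = \alpha$ means $a^{2m} = \alpha$, so $1 + a^{2m} = 1 + \alpha$, and the remaining factor is $1 + a^4 + \cdots + a^{4(m-1)}$, which is $1 + X^2 + X^4 + \cdots + X^{2(m-1)}$ at $X = a^2$. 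This matches the definition $S_{2m}(\alpha, X) = (1+\alpha)(1 + X^2 + \cdots + X^{2(m-1)})$ once one reads the polynomial identity at the appropriate substitution; I would state the substitution convention precisely so the match is unambiguous. For $n = 2m+1$ odd, a similar regrouping using $a^{n} = \alpha$ (so $a^{2j+n} = \alpha a^{2j}$) produces the alternating pattern $1 + \alpha X + X^2 + \alpha X^3 + \cdots$, which is exactly $S_{2m+1}(\alpha, X)$. With this identification, the system $a^n = \alpha$, $s\, S_n(\alpha,a) = \psi a^{2(n-1)}$ is literally the stated pair $X^n = \alpha$, $S_n(\alpha, X)Y - \psi X^{2(n-1)} = 0$ (with $X = a$, $Y = s$), proving (1).

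For part (2), when $\alpha \neq \pm 1$: given a solution $a$ of $X^n = \alpha$, I need to recover $s$. Since $a^n = \alpha \neq \pm 1$, we have $a^2 \neq 1$, so the geometric sum $\sum_{j=0}^{n-1} a^{-2j} = \frac{1 - a^{-2n}}{1 - a^{-2}} = \frac{1 - \alpha^{-2}}{1 - a^{-2}}$ is nonzero (numerator nonzero because $\alpha^2 \neq 1$), hence $s = \psi / \sum_{j=0}^{n-1}a^{-2j}$ is well-defined in $k$ and gives a genuine solution $x = \h(a)\x_{12}(s) \in B$. Conversely any solution in $B$ forces $a^n = \alpha$. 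The main obstacle — really the only delicate point — is the bookkeeping in the previous paragraph: verifying that the regrouped geometric sum, after substituting $a^n = \alpha$, matches the published polynomials $S_n(\alpha, X)$ under the right variable substitution, and getting the parity cases and the factor $X^{2(n-1)}$ exactly right. Everything else is a direct induction and a geometric-series manipulation.
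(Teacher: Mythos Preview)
Your approach is essentially identical to the paper's: compute $x^n$ for $x=\h(a)\x_{12}(s)$ by induction, read off the two scalar equations $a^n=\alpha$ and $s\sum_{j=0}^{n-1}a^{-2j}=\psi$, then for (1) clear denominators and identify the sum with $S_n(\alpha,a)$, and for (2) use the closed geometric-series formula together with $\alpha^2\neq 1$. One small slip in your even-case bookkeeping: the factorisation is $\sum_{j=0}^{2m-1}a^{2j}=(1+a^{2m})(1+a^{2}+\cdots+a^{2(m-1)})$, not $(1+a^{2m})(1+a^{4}+\cdots+a^{4(m-1)})$; since $a^{2m}=\alpha$ this is already $S_{2m}(\alpha,a)$ at $X=a$, so no special substitution convention is needed.
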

\begin{proof}
Let $x = \h(a) \x_{12}(t) \in B$ be a root of $X^n = g$. 
Let us compute using formulae in Proposition~\ref{commuting-formulae}, $x^2= \h(a^2) \x_{12}((1+a^{-2})t)$, $x^3= \h(a^3) \x_{12}((1+a^{-2}+a^{-4})t)$ and inductively, $x^n= \h(a^n) \x_{12}((1+a^{-2}+a^{-4}+\cdots + a^{-2(n-1)})t)$. Thus $x^n=g$ gives two equations, 
$$a^n=\alpha, \hskip5mm (1+a^{-2}+a^{-4}+\cdots + a^{-2(n-1)})t=\psi.$$
Clearly to show that the solution $x$ exists we need to solve these two equations for $a$ and $t$. 

In case of (1), the equations are $a^n=1$ and $(1+a^{-2}+a^{-4}+\cdots + a^{-2(n-1)})t=\psi$. By multiplying the second equation with $a^{-2(n-1)}$ we get the required formula.

In case of (2), by multiplying with $a^{-2}$ to the second equation and subtracting with itself we get $(1-a^{-2n})t=(1-a^{-2})\psi$. Thus to get $a$ we need to solve the equation $X^n=\alpha$ over $k$ and to get $t$ we need to make sure $a^{-2n}\neq 1$, i.e., $\alpha^2\neq 1$. Conversely, the solution $x= \h(a) \x_{12}(t)$ is determined by the root $a^n=\alpha$ provided $a^{2n}\neq 1$ and $t=\left(\frac{1-a^{-2}}{1-a^{-2n}}\right)\psi$. 
\end{proof}

Now, we deal with the case if solution $x$ comes from $B\n B$. First, we deal with some small order cases.
\begin{proposition}\label{B-n-th-power-BnB}
Let $g= \h(\alpha) \x_{12}(\psi)\in B$. Then, 
\begin{enumerate}
\item the equation $x^2 =g$ has a solution $x$ in $B\n B$ if, and only if, $g=-1$. Further, the solutions are of the form $x=\x_{12}(t)\n(a)\x_{12}(-t)$.
\item The equation $x^3=g$ has a solution $x$ in $B\n B$ if, and only if, $g=\pm 1$. Further, the solutions are of the form $x=\x_{12}(a -s) \n(a)\x_{12}(s)$ or $\x_{12}(-a -s) \n(a)\x_{12}(s)$.
\item The equation $x^4=g$ has a solution $x$ in $B\n B$ if, and only if, $g=\pm 1$. The solutions for $g=1$ come from that of $x^2=-1$ in $B\n B$. The equation $x^4=-1$ has a solution $x$ in $B\n B$ if, and only if, $X^2-2Y^2=0$ has a solution $X,Y$ in $k$ with $Y\neq 0$. Further, the solutions are of the form $\x_{12}(t)\n(a)\x_{12}(-t+\gamma a)$ where $\gamma^2=2$.
\end{enumerate}
\end{proposition}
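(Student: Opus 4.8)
The plan is to exploit Proposition~\ref{bnbpowerb}, which already pins down exactly when a power of an element of $B\n B$ returns to $B$. Write a hypothetical solution as $x=\x_{12}(t)\n(a)\x_{12}(s)\in B\n B$ and let $r$ be the smallest positive integer with $x^r\in B$. Since $x\notin B$ we have $r\geq 2$, and since $x^n=g\in B$, Proposition~\ref{bnbpowerb}(2) forces $r\mid n$. For $n\in\{2,3,4\}$ this leaves only finitely many possibilities: $r=2$ when $n=2$; $r=3$ when $n=3$; and $r\in\{2,4\}$ when $n=4$. So the whole argument reduces to analysing each admissible $r$ separately.

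For a fixed admissible $r$, Proposition~\ref{bnbpowerb}(1) says the constraint ``$r$ is the first exit time'' is equivalent to $f_1(t+s,a),\dots,f_{r-2}(t+s,a)$ being nonzero and $f_{r-1}(t+s,a)=0$, while the computation of $x^r$ carried out in the proof of Proposition~\ref{bnbpowerb} gives $x^r=\h\!\left(-\tfrac{a}{a_{r-1}}\right)\x_{12}\!\left(\bigl(\tfrac{a_{r-1}^2}{a^2}-1\bigr)t\right)$, where by the relations~\eqref{powerformulaf} one has $a_{r-1}=(-1)^{r-2}a^{r-1}/f_{r-2}(t+s,a)$. Substituting the low-degree generalised Fibonacci polynomials $f_1=X$, $f_2=X^2-Y^2$, $f_3=X(X^2-2Y^2)$ and solving $f_{r-1}(t+s,a)=0$ for $t+s$ in terms of $a$ makes $a_{r-1}^2/a^2$ collapse to $1$ in each case, so $x^r=\h(-a/a_{r-1})$ is forced to equal $\pm1$; then $g=x^n=(x^r)^{n/r}$ is determined, and in particular the $\x_{12}$-component $\psi$ of $g$ must vanish.

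Carrying this out case by case: for $n=2$ one gets $t+s=0$ and $x^2=-1$; for $n=3$ one gets $(t+s)^2=a^2$, i.e.\ $t+s=\pm a$, hence $x^3=\h(\pm1)$, and the two signs give the two solution families $\x_{12}(a-s)\n(a)\x_{12}(s)$ and $\x_{12}(-a-s)\n(a)\x_{12}(s)$; for $n=4$ the branch $r=2$ reproduces $X^2=-1$ in $B\n B$ and hence $g=1$, while the branch $r=4$ forces $(t+s)^2=2a^2$ --- which, since $a\in k^*$, is solvable in $k$ precisely when $X^2-2Y^2=0$ has a solution with $Y\neq0$ --- and then $a_3=a$, giving $x^4=\h(-1)=-1$ with solutions $\x_{12}(t)\n(a)\x_{12}(-t+\gamma a)$, $\gamma^2=2$. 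In every case the converse is immediate: any $x$ of the stated shape with $t+s$ chosen as above and $a\in k^*$ arbitrary does satisfy $x^n=g$.

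The bookkeeping is routine once the reduction is in place; the only genuinely delicate point is the $n=4$ analysis, where one must keep the two branches $r=2$ and $r=4$ apart, observe that they produce $g=1$ and $g=-1$ respectively (so there is no overlap and no loss of solutions), and correctly extract the square condition on $2$ from the equation $(t+s)^2=2a^2$ together with $a\in k^*$.
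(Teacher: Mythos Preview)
Your proposal is correct and follows essentially the same route as the paper: invoke Proposition~\ref{bnbpowerb} to reduce to the admissible values of the ``first exit time'' $r$, solve the corresponding Fibonacci condition $f_{r-1}(t+s,a)=0$, and read off $x^r=\h(-a/a_{r-1})$ via the formula for $a_{r-1}$. The paper's proof is organised in exactly this way, with the same case split $r=2$ versus $r=4$ in part~(3) and the same identification of $(t+s)^2=2a^2$ as the obstruction for $g=-1$.
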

\begin{proof}
We begin with proving (1). Let $x=\x_{12}(t)\n(a)\x_{12}(s) \in B\n B$ be such that $x^2\in B$. For this it follows from Proposition~\ref{bnbpowerb} that $f_1(t+s,a)=t+s=0$. Hence $x=\x_{12}(t)\n(a)\x_{12}(-t)= \x_{12}(t)\n(a)\x_{12}(t)^{-1}$ and $x^2=-1$.

For (2), if $x^3\in B$ we must have $f_1(t+s,a)=t+s\neq 0$ and $f_2(t+s, a)=(t+s)^2-a^2=0$. And $x^3= \h\left(-\frac{a}{a_{2}}\right) \x_{12}\left( \left(\frac{a_{2}^2}{a^2}-1\right)t\right) = \h\left(\frac{t+s}{a}\right) \x_{12}\left( \left(\frac{a^2}{(t+s)^2}-1\right)t\right) = \h(\pm 1)=\pm 1$ since $a_2=-\frac{a^2}{t+s}$.

For (3), if $x^4\in B$, we make two cases, first when $x^2\in B$. From part (1), this happens when $x^2=-1$ and that would give $x^4=1$. These solutions are conjugates of $\n(a)$. The second case is when $x^2\not\in B$, thus we have $f_1(t+s, a)=t+s\neq 0$, $f_2(t+s,a)=(t+s)^2-a^2\neq 0$ and $f_3(t+s,a)= (t+s)^3-2(t+s)a^2=0$. The last equation gives that $(t+s)^2-2a^2=0$. Now using the formula $a_3=\frac{a^3}{(t+s)^2-a^2}=\frac{a^3}{2a^2-a^2}=a$, we compute $x^4 = \h\left(-\frac{a}{a_{3}}\right) \x_{12}\left( \left(\frac{a_{3}^2}{a^2}-1\right)t\right) = \h\left(-1\right) = -1$. Thus the solution exists only if $g=\pm 1$. We also note that, in the case $g=-1$, the solution exists if and only if $f_3=0$ which is equivalent to having solutions of $X^2-2Y^2=0$.
\end{proof}

\begin{proposition}\label{n-th-powers-in-BnB}
Let $g= \h(\alpha) \x_{12}(\psi)\in B$ and $n\geq 5$. 
\begin{enumerate}
\item When $\alpha\neq \pm 1$, the equation $x^n =g$ has a solution $x$ in $B\n B$ if, and only if, the equations $f_{r-3}(X,Y)^d - (-1)^{d(r-1)}\alpha X^dY^{d(r-4)}=0$ and $f_{r-1}(X,Y)=0$ have simultaneous solutions over $k$ with $Y$ non-zero, for some $d < n$ such that $dr= n$.
\item When $\alpha=\pm 1$, the equation $x^n=g$ has a solution $x$ in $B\n B$ if, and only if, $\psi=0$. In which case, the solutions are of the form $x=\x_{12}(t)\n(a)\x_{12}(\gamma-t)$ where $X=\gamma$ and $Y=a$ are solutions of the  equations $$f_{r-3}(X,Y)^d - (-1)^{d(r-1)} \alpha X^d Y^{d(r-4)}=0, \ \text{and}\ \ f_{r-1}(X,Y)=0$$ with $Y$ non-zero, for some $d < n$ such that $dr= n$.
\end{enumerate}
\end{proposition}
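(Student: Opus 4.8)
The plan is to adapt the argument of Proposition~\ref{n-th-powers-in-Borels}: assume a root $x = \x_{12}(t)\n(a)\x_{12}(s) \in B\n B$ of $X^n = g$, compute $x^n$ explicitly, and extract the equations; then run this backwards for the converse. Since $g \in B$, we have $x^n \in B$, so let $r$ be the least positive integer with $x^r \in B$; then $r \geq 2$ (as $x \notin B$) and $r \mid n$ by Proposition~\ref{bnbpowerb}(2), say $n = dr$ with $d < n$. Writing $\gamma = t + s$, Proposition~\ref{bnbpowerb}(1) tells us $f_1(\gamma,a), \dots, f_{r-2}(\gamma,a)$ are all nonzero while $f_{r-1}(\gamma,a) = 0$; the latter is already the second displayed equation at $(X,Y) = (\gamma,a)$, with $Y = a \neq 0$.

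The crux is to identify $x^r$. From the closed form $u_n = (\phi^n - \psi^n)/(\phi-\psi)$ one obtains the Catalan-type identity $f_{r-2}(X,Y)^2 = f_{r-1}(X,Y)\,f_{r-3}(X,Y) + Y^{2(r-2)}$ for the generalised Fibonacci polynomials, so $f_{r-1}(\gamma,a) = 0$ forces $f_{r-2}(\gamma,a) = \pm a^{r-2}$. Substituting $a_{r-1} = (-1)^{r-2} a^{r-1}/f_{r-2}(\gamma,a)$ from~\eqref{powerformulaf} into the formula for $x^r$ derived in the proof of Proposition~\ref{bnbpowerb} gives $x^r = \h(\beta)\,\x_{12}\big((\beta^{-2}-1)t\big)$ with $\beta = (-1)^{r-1} f_{r-2}(\gamma,a)/a^{r-2} = \pm 1$; hence $(\beta^{-2}-1)t = 0$ and $x^r = \h(\beta) = \beta\, I$. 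Therefore $x^n = (x^r)^d = \beta^d\, I$, and comparing with $g = \h(\alpha)\x_{12}(\psi)$ forces $\psi = 0$, $\alpha = \pm 1$, and $\alpha = \beta^d$. This settles part (1): for $\alpha \neq \pm 1$ there is no root in $B\n B$, and correspondingly the displayed equations — understood together with the nonvanishing of $f_1(X,Y), \dots, f_{r-2}(X,Y)$, which forces $X \neq 0$ when $r \geq 3$ — have no solution, since for $r \geq 3$ the first equation, $f_{r-1} = 0$ and the Fibonacci relation would force $\alpha = \pm 1$, while $r = 2$ means $x^2 = -1$ and so $x^n = \pm 1 \neq g$.

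For part (2), $\alpha = \pm 1$: necessity of $\psi = 0$ is the preceding paragraph. Assuming $\psi = 0$, an element of Bruhat period $r$ with parameters $(\gamma,a)$ is a root precisely when $\beta^d = \alpha$, i.e. $f_{r-2}(\gamma,a)^d = (-1)^{d(r-1)}\alpha\, a^{d(r-2)}$, with $t$ free — this is why the solutions take the form $\x_{12}(t)\n(a)\x_{12}(\gamma - t)$. When $r \geq 3$ we have $\gamma = f_1(\gamma,a) \neq 0$, so the Fibonacci relation $\gamma f_{r-2} = a^2 f_{r-3}$ turns this into $f_{r-3}(\gamma,a)^d = (-1)^{d(r-1)}\alpha\, \gamma^d a^{d(r-4)}$, which together with $f_{r-1}(\gamma,a) = 0$ is exactly the displayed pair; the small periods $r \in \{2,3,4\}$ are covered by Proposition~\ref{B-n-th-power-BnB} (period $2$ being $x^2 = -1$). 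For the converse, from a solution $(\gamma_0, a_0)$ of the displayed pair with $a_0 \neq 0$ and genuine period $r$, the element $x = \x_{12}(t)\n(a_0)\x_{12}(\gamma_0 - t)$ (for any $t$) has $x^r = \h(\beta_0)$ with $\beta_0^d = \alpha$, hence $x^n = \h(\alpha) = g$.

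The hard part will be making the converse watertight: a solution $(\gamma_0,a_0)$ of $f_{r-1}(X,Y) = 0$ need not have all of $f_1(\gamma_0,a_0), \dots, f_{r-2}(\gamma_0,a_0)$ nonzero, so the element built from it may have smaller Bruhat period $r' \mid r$, and the displayed formula for $x^r$ is then not literally applicable. I would deal with this through the period characterisation in Proposition~\ref{bnbpowerb}(1): passing to the true period $r'$ (using $u_{r'} \mid u_r$), one verifies that the equation pair remains consistent and reduces either to the same analysis with $r'$ for $r$ or, for $r' \leq 4$, to Proposition~\ref{B-n-th-power-BnB}. Separating these genuine-period solutions from the spurious ones is the one genuinely delicate point; everything else is bookkeeping with the generalised Fibonacci recursion.
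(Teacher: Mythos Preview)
Your approach is correct and takes a genuinely different route from the paper's. The paper writes $x^r = \h(-a/a_{r-1})\,\x_{12}\big((a_{r-1}^2/a^2 - 1)t\big)$ via Proposition~\ref{bnbpowerb}, raises this generically to the $d$th power to obtain $\h\big((-a/a_{r-1})^d\big)\,\x_{12}\big((A^d-1)t\big)$ with $A = (a_{r-1}/a)^2$, and equates with $g$ to read off the displayed equations together with the auxiliary relation $(\alpha^{-2}-1)t = \psi$; part~(2) then follows from $A^d = \alpha^{-2} = 1$, and the converse for part~(1) is dispatched in a single clause (``determine $t$ since $\alpha^2 \neq 1$''). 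Your new ingredient is the Cassini-type identity $f_{r-2}^2 = f_{r-1}f_{r-3} + Y^{2(r-2)}$: once $f_{r-1}(\gamma,a) = 0$ it forces $f_{r-2}(\gamma,a) = \pm a^{r-2}$, hence $-a/a_{r-1} = \pm 1$ and $x^r = \pm I$ outright. This is strictly sharper --- it shows that \emph{any} $x \in B\n B$ with $x^n \in B$ already has $x^n = \pm I$, so part~(1) is vacuous (no root for $\alpha \neq \pm 1$, and under your genuine-period reading the equations force $\alpha^2 = 1$ as well); the paper's computation never surfaces this structural fact. You are also right that the converse is the delicate point --- a solution of $f_{r-1}=0$ may have smaller Bruhat period $r' \mid r$ --- and the paper does not address this at all; your plan to pass to the true period and fall back on Proposition~\ref{B-n-th-power-BnB} for small $r'$ is the correct fix, though reconciling it with the literal statement (which omits the nonvanishing hypothesis --- e.g.\ $X=0$ solves both displayed equations for even $r$ regardless of $\alpha$) needs somewhat more care than you indicate.
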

\begin{proof}
Let $x=\x_{12}(t)\n(a)\x_{12}(s)$ be a solution of $X^n=g$. Then $x^n=g\in B$. Thus from Proposition~\ref{bnbpowerb} there exists (smallest) $r$ such that $r\mid n$ and $x^r\in B$. Write $n=rd$. Now using the formula in the proof of same Proposition we have,
\begin{eqnarray*}
x^n&=&(x^r)^d = \left(\h\left(-\frac{a}{a_{r-1}}\right) \x_{12}\left( \left(\frac{a_{r-1}^2}{a^2}-1\right)t\right) \right)^d \\
&=& \h\left( \left(-\frac{a}{a_{r-1}}\right)^d\right) \x_{12}\left( (1+A+A^2+\cdots + A^{d-1})(A-1)t\right)\\
&=& \h\left( \left(-\frac{a}{a_{r-1}}\right)^d\right) \x_{12}\left((A^d-1)t\right)
\end{eqnarray*}
where $A= \left(\frac{a_{r-1}}{a}\right)^2$. Equating this with $g$ we get $ \left(-\frac{a}{a_{r-1}}\right)^d=\alpha$ and $(A^d-1)t=\psi$. That is, $\alpha^2=A^{-d}$ and hence $(\alpha^{-2}-1)t=\psi$. Now using the formula for $a_{r-1}$ in terms of Fibonacci polynomials and noting that $f_{r-1}=0$ we get $\left(-\frac{a}{a_{r-1}}\right) = (-1)^{r-1}\frac{f_{r-3}}{(t+s)a^{r-4}}$. Raising to the power $d$ we get the required equation. 

Now, to prove (1) we get the two equations as above. To prove the converse we need to determine $t$. But this is clear as $\alpha^2\neq 1$.

To prove (2), we note that $x^n=\h\left( \left(-\frac{a}{a_{r-1}}\right)^d\right)$ since $\alpha^2=1$. The rest of the proof as stated earlier.
\end{proof}

\subsection{$n^{\rm th}$ roots in $B\n B$}

Let $g= \x_{12}(\tau) \n(\alpha) \x_{12}(\psi)\in B\n B$ and let $x\in {\rm SL}_2(k)$ be a solution of $X^n=g$. Since $B$ is a subgroup the solution $x$ can not belong to $B$.  
We prove the following,
\begin{proposition}\label{BnB-n-th-root-BnB}
For $g=\x_{12}(\tau)\n(\alpha) \x_{12}(\psi)\in B\n B$, the equation $x^n=g$ has a solution $x$ in ${\rm SL}_2(k)$ if, and only if, the following equations have solution $X,Y$ in $k$:
\begin{enumerate}
 \item $2\alpha f_{n-2}(X,Y) + (-1)^{n-2}XY^{n-2} + (-1)^{n-1}(\tau +\psi) Y^{n-2} = 0$, and,
 \item $\alpha f_{n-1}(X,Y) + (-1)^{n}Y^n = 0$
\end{enumerate}
where $f_n(X,Y)$ denotes the generalised Fibonacci polynomials.
\end{proposition}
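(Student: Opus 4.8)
\emph{Proof proposal.} The plan is to reduce $X^{n}=g$ for $g=\x_{12}(\tau)\n(\alpha)\x_{12}(\psi)\in B\n B$ to the power formula of Proposition~\ref{powerformula}. Since $B$ is a subgroup and $g\notin B$, no solution can lie in $B$ --- this already gives the final assertion --- and any solution $x$ must lie in $B\n B$, say $x=\x_{12}(t)\n(a)\x_{12}(s)$ with $a\in k^{*}$. In the resulting polynomial equations $X$ will stand for $t+s$ and $Y$ for $a$; observe that $Y=a\neq 0$, the non-degeneracy that is tacitly required in the statement (the variable $Y$ records the scalar of an $\n(\cdot)$, which is always invertible).

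The key step is to show that $x^{n}$ is given by the formulas~\eqref{powerformulaf} in the variables $(t+s,a)$, and that this needs only $f_{n-1}(t+s,a)\neq 0$. First, $f_{n-1}(t+s,a)\neq 0$: if it vanished, let $r\geq 2$ be minimal with $f_{r-1}(t+s,a)=0$; then $x^{r}\in B$ by Proposition~\ref{bnbpowerb}(1), and the relations $u_{m}\mid u_{n}\Leftrightarrow m\mid n$ and $(u_{r},u_{n})=u_{(r,n)}$ for the polynomials $u_{m}$, together with minimality of $r$, force $r\mid n$; hence $x^{n}\in B$ by Proposition~\ref{bnbpowerb}(2), contradicting $x^{n}=g\notin B$. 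Second, although Proposition~\ref{powerformula} is stated under the hypothesis that $x,\dots,x^{n}$ all lie in $B\n B$, the identity~\eqref{powerformulaf} is an equality of matrices over the fraction field $\Q(t,s,a)$ whose right-hand side is regular on the locus $f_{n-1}(t+s,a)\neq 0$ and which holds on the dense open set where $f_{1}\cdots f_{n-1}$ is non-zero; hence it holds at every specialisation with $f_{n-1}(t+s,a)\neq 0$. I expect this Zariski-density point to be the main obstacle to a clean write-up.

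Granting the formula, I equate the two sides of $x^{n}=g$ using the uniqueness of the expression $\x_{12}(t)\n(a)\x_{12}(s)$ for elements of $B\n B$. The $\n(\cdot)$-coordinate gives $(-1)^{n-1}a^{n}=\alpha f_{n-1}(t+s,a)$, which is exactly equation~(2) with $(X,Y)=(t+s,a)$. The $\x_{12}(\cdot)$-coordinates give $t-a^{2}f_{n-2}/f_{n-1}=\tau$ and $s-a^{2}f_{n-2}/f_{n-1}=\psi$; their difference is $t-s=\tau-\psi$ (which merely fixes $t,s$ in terms of $t+s$ and is automatically consistent), while their sum, after clearing denominators, is $(t+s)f_{n-1}-2a^{2}f_{n-2}=(\tau+\psi)f_{n-1}$. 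Multiplying by $\alpha$, substituting $\alpha f_{n-1}=(-1)^{n-1}a^{n}$, dividing by $a^{2}$ and using $(-1)^{n-2}=(-1)^{n}$ converts this into equation~(1). This proves the forward direction; the sign bookkeeping in the last step is the only other place demanding care.

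For the converse, suppose equations~(1)--(2) have a solution $(X,Y)$ over $k$ with $Y\neq 0$. By equation~(2), $f_{n-1}(X,Y)=(-1)^{n-1}Y^{n}/\alpha\neq 0$. Put $a=Y$ and let $t,s$ be the unique scalars with $t+s=X$ and $t-s=\tau-\psi$ (possible since $\mathrm{char}(k)\neq 2$); then $x=\x_{12}(t)\n(a)\x_{12}(s)\in B\n B$, the formula~\eqref{powerformulaf} applies because $f_{n-1}(t+s,a)\neq 0$, and running the computation of the previous paragraph backwards yields $x^{n}=g$. This establishes the equivalence.
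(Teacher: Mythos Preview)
Your approach is the same as the paper's: read off the three coordinates of $x^{n}$ from the power formula~\eqref{powerformulaf}, add the two $\x_{12}$-equations, and substitute the $\n$-equation to obtain (1) and (2); for the converse, reconstruct $t,s$ from $t+s=X$ and $t-s=\tau-\psi$. You are in fact more careful than the paper on one point: the paper simply invokes~\eqref{powerformulaf} without checking that every intermediate power $x^{i}$ lies in $B\n B$ (which need not hold even when $x^{n}\in B\n B$), and your Zariski-density extension of the formula to the locus $f_{n-1}\neq 0$ is exactly the patch that is needed.

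One small correction: your deduction of $r\mid n$ from the polynomial identities $u_{m}\mid u_{n}\Leftrightarrow m\mid n$ and $(u_{r},u_{n})=u_{(r,n)}$ is not valid as written, since a gcd relation in $\mathbb{Z}[X,Y]$ does not by itself control common zeros (for instance $X$ and $Y$ are coprime yet both vanish at the origin). This detour is, however, unnecessary. Your density argument already identifies the $(2,1)$-entry of $x^{n}$ with $-a_{n}^{-1}=(-1)^{n}f_{n-1}(t+s,a)/a^{n}$ as a polynomial identity, so $x^{n}\in B\n B$ is \emph{equivalent} to $f_{n-1}(t+s,a)\neq 0$; no appeal to minimal $r$ or to Proposition~\ref{bnbpowerb} is required, and the rest of your argument goes through unchanged.
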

\begin{proof}
Let $x=\x_{12}(t)\n(a)\x_{12}(s)$ be a solution, i.e., $x^n=g$. Thus we get $t_n=\tau$, $s_n=\psi$ and $a_n=\alpha$. Using the formulae~\ref{powerformulaf} we get the following:
\begin{eqnarray}
\tau &=& t - a^2\frac{f_{n-2}(t+s,a)}{f_{n-1}(t+s,a)},  \\ 
\psi &= & s - a^2\frac{f_{n-2}(t+s,a)}{f_{n-1}(t+s,a)}, \nonumber \\  
\alpha  &=& (-1)^{n-1}\frac{a^n}{ f_{n-1}(t+s,a)} \nonumber
\end{eqnarray}
We add the first two equations and substitute $X=t+s$ and $Y=a$, to get,
$$\tau+\psi = X - 2Y^2 \frac{f_{n-2}(X,Y)}{f_{n-1}(X,Y)}.$$
The last equation becomes $\alpha f_{n-1}(X,Y) + (-1)^{n}Y^n = 0$ which is the second required equation in the theorem. 
Now we substitute this and get,
$$\tau+\psi = X - (-1)^{n-1} 2Y^2 \frac{f_{n-2}(X,Y) \alpha}{Y^n}$$
which is the required first equation. 

For converse, let $T$ and $a$ be a solution to the equations. That is we know, $t+s=T$ and $a$. We need to show existence of $t,s$ and $a$, so that $x^n=g$. Then the second equation gives, $f_{n-1}(t+s, a) = (-1)^{n-1}\frac{a^n}{\alpha}$. And the first equation gives $f_{n-2}(t+s,a)$ and hence we can determine $t$ and $s$ separately.
\end{proof}
In general it is difficult to separate out the variables $X$ and $Y$ from above equations. However, for $n=2,3$ and $4$ we can do better and reduce these equations to simpler equations. This we discuss in the following sections. Now we apply our results obtained so far to compute powers.

\section{Counting powers in ${\rm SL}_2(\mathbb F_q)$}\label{sec-counting}
Let $c(n,q)$ be the number of conjugacy classes in ${\rm SL}_2(\mathbb F_q)$ which are $n^{\rm th}$ power, and,  $s(n,q)$ be the number of elements in ${\rm SL}_2(\mathbb F_q)$ which are $n^{\rm th}$ power. In this section, we compute this number for ${\rm SL}_2(\mathbb F_q)$. Clearly, when $n \nmid (q^3-q) = |{\rm SL}_2(\mathbb F_q)|$ then $c(n,q)=q+4$ and $s(n,q)=(q^3-q)$. Thus, in what follows we assume $n\mid (q^3-q)$. We further compute the asymptotic formula for the ratio of conjugacy classes which are $n^{\rm th}$ powers, $c(n)=\displaystyle{\lim_{q\to \infty}}\frac{c(n,q)}{q+4}$ and the ratio of elements which are $n^{\rm th}$ powers,  
$s(n)=\displaystyle{\lim_{q\to \infty}}\frac{s(n,q)}{q^3-q}$. The main theorem is as follows:
\begin{theorem}\label{count-prime-powers}
Suppose $q$ is odd.  Then,
\begin{enumerate}
\item For $n=2$,
\[\begin{tabular}{|c|c|c|}\hline
 $q \imod 4$ & $c(2,q)$ & $s(2,q)$   \\ \hline
 $1$ & $\frac{q+5}{2}$ & $\frac{q^2(q-1)}{2} -q + 1$  \\ \hline 
 $3$ & $\frac{q+5}{2}$ & $\frac{q^2(q-1)}{2} + 1$   \\ \hline  
\end{tabular}
\]
Thus, $c(2)=s(2)=\frac{1}{2}$. 
\item For $n=4$, 
\[ 
\begin{tabular}{|c|c|c|}\hline
$q \imod 8$ & $c(4,q)$ & $s(4,q)$\\ \hline 
$1$ &  $\frac{3q+21}{8}$ & $\frac{3}{8}q^3 - \frac{1}{2}q^2 -\frac{7}{8}q +1$ \\ \hline
$3$ &  $\frac{3q+15}{8}$ & $\frac{3}{8}(q^3-q)$ \\ \hline
$5$ &  $\frac{3q+17}{8}$ & $\frac{3}{8}(q^3-q)$ \\ \hline 
$7$ &  $\frac{3q+11}{8}$ & $\frac{3}{8}q^3 -\frac{1}{2}q^2 + \frac{1}{8}q +1.$ \\ \hline
\end{tabular}
\]
Thus, $c(4)=s(4)=\frac{3}{8}$. 
\item When $n$ is an odd prime then,
\[
\begin{tabular}{|c|c|c|} \hline 
 & $c(n,q)$ & $s(n,q)$ \\ \hline
$n \mid q$ & $q$ & $(q-2)(q^2-1)$\\ \hline
$n \mid (q-1)$ & $\frac{(n+1)(q-1)}{2n} + 5$ &  $\frac{(n+1)(q^3-q)}{2n}$ \\ \hline
$n \mid (q+1) $& $\frac{(n+1)(q-3)+4}{2n} + 5$ & $ \frac{(n+1)(q^3-q)}{2n}.$ \\ \hline
\end{tabular}
\]
Thus, $c(n)=s(n)=\frac{n+1}{2n}$.
\end{enumerate}
\end{theorem}
\noindent Rest of the section is devoted to the proof of this theorem.

\begin{lemma}\label{squares-in-sl2}
Suppose ${\rm char}(k)\neq 2$. Then the set of squares in ${\rm SL}_2(k)$ is the union of following disjoint subsets.
\begin{enumerate}
\item $\mathcal S_1 = \{\h(a^2) \x_{12}(t) \mid a \in k^*\ with\ a^2 + 1 \neq 0,  t \in k\}$.
\item $\mathcal S_2 = \{-1\}  \cup \{\x_{12}(s) \n(b) \x_{12}\left(-s + b (a^2 - 2) \right) \mid a, b \in k^*, s \in k\}$.
\end{enumerate}
\end{lemma} 
\begin{proof}
The first set, except possibly the element $-1$, is obtained by squaring elements of $B$. For $x=\h(a)\x_{12}(t)\in B$ we note that $x^2= \h(a^2)\x_{12}\left((1+a^{-2})t\right) \in \mathcal S_1$ except when $a^2=-1$. The case when $a^2=-1$ gives the element $-1$ which is also obtained by squaring certain elements of $B\n B$, for example, $\n$. 
 
Now to get $\mathcal S_2$, we square elements of the set $B\n B$. Thus this gives elements of the form $\x_{12}(\tau)\n(\alpha)\x_{12}(\psi)$ such that $2-\frac{\tau+\psi}{\alpha}\in {k^*}^2$. This gives the required set.
\end{proof}

\begin{proof}[\bf{Proof of Theorem~\ref{count-prime-powers}(1)}]
We count the cardinalities of $\mathcal S_1$ and $\mathcal S_2$ as in Lemma~\ref{squares-in-sl2}. The cardinality of $\mathcal S_1$ is $\frac{q(q-1)}{2}$ when $-1 \notin {\mathbb F_q^*}^2$ and $\frac{q(q-3)}{2}$ when $-1 \in  {\mathbb F_q^*}^2$. The cardinality of $\mathcal S_2$ in both the cases is $q(q-1)\left(\frac{q-1}{2}\right) + 1$. Since $\mathcal S_1$ and $\mathcal S_2$ are disjoint, we get $s(2,q)$ by adding the two. 

Now we count conjugacy classes that are squares. The two central classes $\pm 1$ are square. The element $-1$ is a square of any conjugate of $\n$. The split regular semisimple classes are of the form $\h(a)$ with $a\neq \pm 1$. We know that, $\h(a)$ has a square root if and only if $X^2-a$ has a root in $\mathbb F_q$. Thus, the classes which are square are of the form $\h(a^2)$ with $a^2\not\in \{0, \pm 1\}$. These are $\lfloor\frac{q-3}{4}\rfloor$ classes out of total $\frac{q-3}{2}$ such classes. The non-semisimple classes are the $4$ conjugacy classes represented by $\x_{12}(1)$, $\h(-1) \x_{12}(-1)$, $\x_{12}(\epsilon)$, $\h(-1) \x_{12}(-\epsilon)$ where $\epsilon$ is a fixed non-square in $\mathbb F_q$. However, only $\x_{12}(1)$, $\x_{12}(\epsilon)$ have square roots (note that $q$ is odd). The anisotropic regular semisimple conjugacy classes are of the form $\n(-1) \x_{12}(\delta)$ with $\delta^2-4$ a non-square. Again from Proposition~\ref{BnB-n-th-root-BnB}, square root of class  $\n(-1) \x_{
12}(\delta)$ exists if and only if $X^2=2+\delta$ has a solution in  $\mathbb F_q$. These are $\lfloor\frac{q-1}{4}\rfloor$ classes out of total $\frac{q-1}{2}$ such classes. Adding all of these, we get the total number of conjugacy classes which are square $= 2+\lfloor\frac{q-3}{4}\rfloor + 2 + \lfloor\frac{q-1}{4}\rfloor = \frac{q+5}{2}$.
\end{proof}

The main hindrance in counting higher power is to count the anisotropic regular semisimple classes. Let us look at it from slightly different perspective. Let $\xi\in \mathbb F_{q^2}^*$, then left multiplication $l_{\xi}$ defines a groups homomorphism $l\colon \mathbb F_{q^2}^* \rightarrow GL_2(\mathbb F_q)$. The subgroup $\mathbb F_{q^2}^1=\{x \mid N(x)=x^{1+q}=1\}$ is of order $q+1$ and is the kernel of the norm map $N \colon \mathbb F_{q^2}^* \rightarrow \mathbb F_q^*$ given by $x\mapsto x^{1+q}$. Further, the elements $\xi\in \mathbb F_{q^2}^1$ under the map $l$ correspond to the elements in ${\rm SL}_2(\mathbb F_q)$. Note that $\mathbb F_{q^2}^1 \bigcap \mathbb F_q =\{\pm 1\}$. Under the homomorphism $l$, the elements of $\mathbb F_{q^2}^1 \backslash \mathbb F_q = \mathbb F_{q^2}^1 \backslash \{\pm 1\}$ correspond to the anisotropic regular semisimple conjugacy classes of ${\rm SL}_2(\mathbb F_q)$. Notice that two elements correspond to the same conjugacy class and hence the number of conjugacy classes is $\frac{
q+1-2}{2}= \frac{q-1}{2}$. Let $g=l_{\xi}$ be a representative of an anisotropic regular semisimple class and we wish to solve the equation $X^n=l_{\xi}$. First observe that, if a solution $x$ to this equation exists in ${\rm SL}_2(\mathbb F_q)$, it must be in $B\n B$ and correspond to an anisotropic regular semisimple class, say represented by $\zeta$, that is, we would have $yl_{\zeta}^ny^{-1}=l_{\xi}$. This amounts to finding solution of $X^n=\xi$ in $\mathbb F_{q^2}^1 \backslash \{\pm 1\}$. Hence we have,
\begin{lemma}\label{ars-power}
With notation as above and $q$ odd, the number of anisotropic regular semisimple classes in ${\rm SL}_2(\mathbb F_q)$ which are $n^{\rm th}$ power is as follows 
$$
\begin{cases}
\frac{q-1}{2}, & \text{if } (n, q+1)=1  \\
\frac{1}{2}\left(\frac{q+1}{d} - 1\right),  &  \text{if } (n, q+1)>1 \ and -1\notin ({\mathbb F_{q^2}^1})^n  \\
\frac{1}{2}\left(\frac{q+1}{d} - 2\right), &  \text{if } (n, q+1)>1 \ and -1\in ({\mathbb F_{q^2}^1})^n. 
\end{cases}
$$
where $d$ is the size of kernel of the map $\mathbb F_{q^2}^1 \rightarrow \mathbb F_{q^2}^1$ given by $x\mapsto x^n$.
\end{lemma}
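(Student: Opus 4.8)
The plan is to reduce the whole count to an elementary computation inside the cyclic group $C := \mathbb{F}_{q^2}^1$ of order $q+1$. As explained in the paragraph preceding the statement, every anisotropic regular semisimple class of $SL_2(\mathbb{F}_q)$ is the image under $l$ of a well-defined unordered pair $\{\xi,\xi^q\}=\{\xi,\xi^{-1}\}$ with $\xi\in C\setminus\{\pm1\}$ (the two Galois conjugates yield conjugate matrices, and $\pm1$ give the central classes, which are excluded), and a representative $l_\xi$ of such a class is an $n^{\rm th}$ power in $SL_2(\mathbb{F}_q)$ precisely when $\xi\in C^n$: indeed any $x$ with $x^n=l_\xi$ centralises $l_\xi$, hence lies in the torus $l(C)$, so $x=l_\zeta$ with $\zeta^n=\xi$. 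Thus I would first record that the number of anisotropic regular semisimple classes which are $n^{\rm th}$ powers equals $\tfrac12\,\bigl|C^n\setminus\{\pm1\}\bigr|$, the factor $\tfrac12$ being legitimate because $C^n$ is a subgroup (hence stable under inversion) and because $\xi^2=1$ forces $\xi=\pm1$, so $C^n\setminus\{\pm1\}$ is partitioned into genuine two-element inverse pairs $\{\xi,\xi^{-1}\}$.

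Next I would compute $\bigl|C^n\setminus\{\pm1\}\bigr|$. Since $C$ is cyclic, $C^n$ is its unique subgroup of order $(q+1)/d$, where $d=|\ker(x\mapsto x^n)|=(n,q+1)$. One always has $1\in C^n$. If $(n,q+1)=1$, then $C^n=C$, every element — in particular both of $\pm1$ — is an $n^{\rm th}$ power, so $\bigl|C^n\setminus\{\pm1\}\bigr|=q-1$ and we obtain $(q-1)/2$ classes. If $(n,q+1)>1$, then $-1$ is the unique element of order $2$ in $C$ (here $q$ odd makes $q+1$ even, and $N(-1)=(-1)^{q+1}=1$), so $-1\in C^n$ if and only if $2\mid(q+1)/d$, which is exactly the hypothesis $-1\in(\mathbb{F}_{q^2}^1)^n$ in the statement. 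In that case $\bigl|C^n\setminus\{\pm1\}\bigr|=(q+1)/d-2$; otherwise $\bigl|C^n\setminus\{\pm1\}\bigr|=(q+1)/d-1$. Dividing by $2$ yields the three displayed values, and the parities are consistent: when $-1\in C^n$ the order $(q+1)/d$ is even and when $-1\notin C^n$ it is odd, so each count is an integer.

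The only genuinely delicate point is the bookkeeping at the two central elements $\pm1$: they must be removed (they index central, not anisotropic, classes), and the entire case split rests on recognising exactly when $-1$ is an $n^{\rm th}$ power in $C$. Everything else — the subgroup structure of a cyclic group and the inversion pairing — is routine, so in the writeup I would devote most of the space to making the bijection between these classes and inverse pairs in $C\setminus\{\pm1\}$ precise and to the one-line observation that $-1\in C^n\iff 2\mid (q+1)/(n,q+1)$.
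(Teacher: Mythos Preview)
Your proof is correct and follows exactly the approach the paper sketches in the paragraph preceding the lemma: identify anisotropic regular semisimple classes with inverse pairs in the cyclic group $\mathbb{F}_{q^2}^1\setminus\{\pm1\}$, reduce the $n^{\rm th}$-power question to membership in the subgroup $(\mathbb{F}_{q^2}^1)^n$, and count. The paper in fact offers no proof beyond that setup and the words ``Hence we have,'' so your write-up---including the clean centralizer argument for why any $n^{\rm th}$ root of $l_\xi$ must already lie in the anisotropic torus $l(C)$---is strictly more complete than what appears in the paper.
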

\begin{corollary}\label{ars-prime-counting}
When $n \geq 3$, a prime, the number of anisotropic regular semisimple conjugacy classes in ${\rm SL}_2(\mathbb F_q)$ which are $n^{\rm th}$ power is  
$$
\begin{cases}
\frac{q-1}{2}, & \text{if } n\nmid q+1  \\
\frac{q+1}{2n} - 1, & \text{if } n\mid q+1. 
\end{cases}
$$
\end{corollary}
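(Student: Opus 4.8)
The plan is to read off the result directly from the preceding Lemma, once we determine, in the prime case, the value of $d$ and whether $-1 \in (\mathbb{F}_{q^2}^1)^n$. Since $n$ is prime, $\gcd(n, q+1)$ equals $1$ or $n$, according as $n \nmid q+1$ or $n \mid q+1$. In the first case $(n, q+1)=1$, and the Lemma immediately gives the count $\frac{q-1}{2}$, which is the first branch of the corollary.

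So assume $n \mid q+1$. The group $\mathbb{F}_{q^2}^1$ is cyclic of order $q+1$ (being the kernel of the norm map from the cyclic group $\mathbb{F}_{q^2}^*$), so the map $x \mapsto x^n$ has kernel of size $d = \gcd(n, q+1) = n$, and its image $(\mathbb{F}_{q^2}^1)^n$ is the unique subgroup of order $\frac{q+1}{n}$. Hence the two remaining branches of the Lemma collapse to either $\frac{1}{2}\!\left(\frac{q+1}{n}-1\right)$ or $\frac{1}{2}\!\left(\frac{q+1}{n}-2\right)$, and it remains only to decide which one applies, i.e.\ whether $-1 \in (\mathbb{F}_{q^2}^1)^n$.

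The one genuinely non-bookkeeping point is that $-1$ is the unique element of order $2$ in the cyclic group $\mathbb{F}_{q^2}^1$ (here we use that $q$ is odd, so $2 \mid q+1$), and therefore $-1$ lies in the index-$n$ subgroup $(\mathbb{F}_{q^2}^1)^n$ exactly when that subgroup has even order, that is, when $\frac{q+1}{n}$ is even. Since $n$ is an odd prime, dividing the even integer $q+1$ by $n$ does not remove its factor of $2$, so $\frac{q+1}{n}$ is always even and $-1 \in (\mathbb{F}_{q^2}^1)^n$. Thus we are always in the third branch of the Lemma, giving $\frac{1}{2}\!\left(\frac{q+1}{n}-2\right) = \frac{q+1}{2n}-1$, as claimed. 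I do not expect any real obstacle: the whole argument is a short structural analysis of the cyclic group $\mathbb{F}_{q^2}^1$, the only subtle step being the parity observation that forces the membership of $-1$ and hence the uniform formula in the case $n \mid q+1$.
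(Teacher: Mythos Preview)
Your proof is correct and follows the same route as the paper: apply the preceding Lemma and determine, for odd prime $n$, the value of $d$ and whether $-1\in(\mathbb F_{q^2}^1)^n$. The only difference is in how you justify $-1\in(\mathbb F_{q^2}^1)^n$: you argue via the structure of the cyclic group $\mathbb F_{q^2}^1$ and the parity of $\frac{q+1}{n}$, while the paper simply observes that since $n$ is odd, $(-1)^n=-1$, so $-1$ is its own $n^{\rm th}$ power in $\mathbb F_{q^2}^1$. Your argument is perfectly valid, but the step you flag as ``the only subtle step'' is in fact immediate from the oddness of $n$.
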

\begin{proof}
Since $n$ is an odd prime, $\pm 1$ both are $n^{\rm th}$ power. Rest follows from the Lemma above. 
\end{proof}
\begin{corollary}\label{ars-4-counting}
When $n=4$ the total number of anisotropic regular semisimple classes in ${\rm SL}_2(\mathbb F_q)$ which are fourth power is, 
$$
\begin{cases}
\frac{q-1}{4}, & \text{if } q \equiv 1 \imod 4  \\
\frac{q-3}{8}, & \text{if } q\equiv 3 \imod 8  \\
\frac{q-7}{8}, & \text{if } q\equiv 7 \imod 8. 
\end{cases}
$$
\end{corollary}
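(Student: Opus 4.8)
The plan is simply to specialise the preceding Lemma to $n=4$ and unwind the three cases according to the $2$-adic behaviour of $q+1$. Since $q$ is odd, $q+1$ is even, so $(4,q+1)\geq 2>1$; hence the first bullet of the Lemma never applies, and we are always in the second or third case. Everything therefore reduces to computing two things: the integer $d$, which is the order of $\ker(x\mapsto x^4)$ on $\mathbb F_{q^2}^1$, and the truth value of the assertion $-1\in(\mathbb F_{q^2}^1)^4$.

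First I would use that $\mathbb F_{q^2}^1$ is cyclic of order $q+1$, being a finite subgroup of the multiplicative group of a field. Then $d=(4,q+1)$, and moreover $(\mathbb F_{q^2}^1)^4$ is the unique subgroup of order $(q+1)/d$. A short computation of $v_2(q+1)$ gives $d=2$ when $q\equiv 1\pmod 4$ and $d=4$ when $q\equiv 3\pmod 4$. Next, since $q$ is odd we have $N(-1)=(-1)^{q+1}=1$, so $-1\in\mathbb F_{q^2}^1$ and is its unique element of order $2$; consequently $-1$ lies in the subgroup $(\mathbb F_{q^2}^1)^4$ of order $(q+1)/d$ if and only if $(q+1)/d$ is even. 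Now $(q+1)/2$ is odd exactly when $q\equiv 1\pmod 4$, while $(q+1)/4$ is odd when $q\equiv 3\pmod 8$ and even when $q\equiv 7\pmod 8$. Hence $-1\notin(\mathbb F_{q^2}^1)^4$ in the first two ranges, and $-1\in(\mathbb F_{q^2}^1)^4$ in the last.

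Finally I would substitute into the Lemma. For $q\equiv 1\pmod 4$ the second case gives $\frac12\!\left(\frac{q+1}{2}-1\right)=\frac{q-1}{4}$; for $q\equiv 3\pmod 8$ the second case gives $\frac12\!\left(\frac{q+1}{4}-1\right)=\frac{q-3}{8}$; and for $q\equiv 7\pmod 8$ the third case gives $\frac12\!\left(\frac{q+1}{4}-2\right)=\frac{q-7}{8}$, as claimed. There is no genuine obstacle here: the work is entirely bookkeeping of $v_2(q+1)$ across the congruence classes, and the only point requiring a moment's care is the observation that the fourth powers form the \emph{unique} subgroup of index $d$ in the cyclic group $\mathbb F_{q^2}^1$, which is what turns the membership test for $-1$ into a parity check on $(q+1)/d$.
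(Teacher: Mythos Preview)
Your proposal is correct and follows the same overall strategy as the paper: specialise the preceding Lemma to $n=4$, compute $d=(4,q+1)$ according to $q\bmod 4$, decide whether $-1\in(\mathbb F_{q^2}^1)^4$, and substitute. The one point where you diverge from the paper is the membership test for $-1$. You exploit directly that $\mathbb F_{q^2}^1$ is cyclic of order $q+1$, so $-1$ is its unique element of order $2$ and lies in the index-$d$ subgroup of fourth powers precisely when $(q+1)/d$ is even; this reduces everything to a clean parity check on $v_2(q+1)$. The paper instead argues by first asking whether $-1$ is a fourth power in the ambient field $\mathbb F_{q^2}$ and then whether such a root can be taken of norm $1$. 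Your route is both shorter and more robust: the paper's intermediate assertion that ``$q^2\equiv 1\pmod 8$ if and only if $q\equiv\pm 1\pmod 8$'' is in fact false for odd $q$ (one always has $8\mid q^2-1$), so its argument as written does not actually separate the cases, whereas your parity criterion does so without ambiguity and recovers the same final answers.
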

\begin{proof} Since $q$ is odd, $4$ is never co-prime to $q+1$.
We note that the size of kernel of the map $\mu_4\colon \mathbb F_{q^2}^1 \rightarrow \mathbb F_{q^2}^1$ given by $x\mapsto x^4$ is $4^{\rm th}$ root of unity, and hence, 
$$
d= \begin{cases} 2 & \text{if}\ q\equiv 1 \imod 4, \\
4  &  \text{if}\ q\equiv 3 \imod 4.
\end{cases}$$
Now we need to determine when $-1$ is in the image of $\mu_4$. We know that $-1$ is a fourth power in the field $\mathbb F_{q^2}$ if and only if $q^2 \equiv 1 \imod 8$, which is if and only if $q\equiv \pm 1 \imod 8$. In the case, $q\equiv 1\imod 8$, the elements $-1$ has fourth root in the base field $\mathbb F_q$ itself and hence it is not in the image of $\mathbb F_{q^2}^1$ under norm map (only $\pm 1$ in $\mathbb F_q$ have this property). Thus, if $-1$ has fourth root in $\mathbb F_{q^2}$ and the root is a norm $1$ element, happens, if and only if when $q\equiv 7 \imod 8$. We get the counting using Lemma above.
\end{proof}

\begin{proof}[\bf{Proof of Theorem~\ref{count-prime-powers}(2)}]
Now, we explicitly count the number of conjugacy classes as well as elements that are fourth powers in ${\rm SL}_2(\mathbb F_q)$. We analyse each conjugacy class one-by-one. The element $1$ is a fourth power of itself. However the fourth root of $-1$ need not exist in ${\rm SL}_2(\mathbb F_q)$ always, this happens if and only if  $q\equiv \pm 1 \imod 8$. Thus both of these classes are fourth power if and only if  $q\equiv \pm 1 \imod 8$. The split regular semisimple classes are represented by elements $\h(a)$ with $a\neq \pm 1$. Clearly $\h(a)$ has a fourth root if and only if $X^4-a=0$ has a solution in $\mathbb F_q$.  Hence the total number is $\frac{q-1}{2d}-1$ if $q\equiv 1 \imod 8$, else it is $\frac{q-1}{2d}-\frac{1}{2}$, where $d=(q-1,4)$. Which we tabulate below: 
\[
\begin{tabular}{|c|c|} \hline
$q \imod 8$ &  total number \\ \hline 
1 &  $\frac{q-9}{8}$ \\ \hline
3 &  $\frac{q-3}{4}$ \\ \hline
5 &  $\frac{q-5}{8}$ \\ \hline 
7 &  $\frac{q-3}{4}$. \\ \hline
\end{tabular}
\]
The non-semisimple classes are $4$ conjugacy classes of this kind given by $\x_{12}(1)$, $\h(-1)\x_{12}(-1)$, $\x_{12}(\epsilon)$, $\h(-1) \x_{12}(-\epsilon)$ where $\epsilon$ is a fixed non-square in $\mathbb F_q$. Representatives of all these classes are of the form $\pm\x_{12}(\psi)$ with $\psi\neq 0$. Out of these only $\x_{12}(1)$ and $\x_{12}(\epsilon)$ are fourth power. The anisotropic regular semisimple conjugacy classes are $\n(-1)\x_{12}(\delta)$ such that $X^2-\delta X+1$ is irreducible over $\mathbb F_q$. Using Corollary~\ref{ars-4-counting} we get the total number of these classes which are fourth power as in the following table:
\[
\begin{tabular}{|c|c|} \hline
$q \imod 8$ & total number \\ \hline 
1 &  $\frac{q-1}{4}$ \\ \hline
3 &  $\frac{q-3}{8}$ \\ \hline
5 &  $\frac{q-1}{4}$ \\ \hline 
7 &  $\frac{q-7}{8}$. \\  \hline 
\end{tabular}
\]
Thus, we get the total number of conjugacy classes which are fourth power and get the required result. 
Now we count the number of elements. We do this counting case-by-case as follows.
\begin{enumerate}
\item When $q\equiv 1 \imod 8$ the total number of elements which are fourth power is
$$2.1 + \frac{q-9}{8}.q(q+1) + 2. \frac{q^2-1}{2} + \frac{q-1}{4}.q(q-1) = \frac{3}{8}q^3 - \frac{1}{2}q^2 -\frac{7}{8}q +1.$$
\item When $q\equiv 3 \imod 8$ the total number of elements which are fourth power is
$$1.1 + \frac{q-3}{4}.q(q+1) + 2. \frac{q^2-1}{2} + \frac{q-3}{8}.q(q-1) = \frac{3}{8}(q^3-q).$$
\item When $q\equiv 5 \imod 8$ the total number of elements which are fourth power is
$$1.1 + \frac{q-5}{8}.q(q+1) + 2. \frac{q^2-1}{2} + \frac{q-1}{4}.q(q-1) = \frac{3}{8}(q^3-q).$$
\item When $q\equiv 7 \imod 8$ the total number of elements which are fourth power is
$$2.1 + \frac{q-3}{4}.q(q+1) + 2. \frac{q^2-1}{2} + \frac{q-7}{8}.q(q-1) = \frac{3}{8}q^3 - \frac{1}{2}q^2 +\frac{1}{8}q +1.$$
\end{enumerate}
\end{proof}

\begin{proof}[\bf{Proof of Theorem~\ref{count-prime-powers}(3)}]
The proof is similar to the earlier cases. Since $n$ divides the order of group, $n$ divides exactly one of  $q-1, q, q+1$. 
\begin{enumerate}
\item When $n\mid q$, that also means $n\nmid (q-1)(q+1)$ hence,
 $$c(n,q)= 2 + \frac{q-3}{2} + 0 + \frac{q-1}{2} = q$$
and 
$$s(n,q)= 2.1 + \frac{q-3}{2}.q(q+1) + 0.\frac{q^2-1}{2} + \frac{q-1}{2}.q(q-1) = (q-2)(q^2-1).$$
\item When $n \mid (q-1)$ that also implies $n \nmid q(q+1)$ hence,
$$c(n,q)= 2 + \left(\frac{q-1}{2n} - 1\right) + 4 + \frac{q-1}{2} = \frac{(n+1)(q-1)}{2n} + 5$$
and 
$$s(n,q)= 2.1 + \left(\frac{q-1}{2n} - 1\right).q(q+1) + 4.\frac{q^2-1}{2} + \frac{q-1}{2}.q(q-1) = \frac{(n+1)(q^3-q)}{2n}.$$
\item Similarly, when $n \mid (q+1) $ gives $n\nmid (q-1)q$ thus,
$$c(n,q)= 2 + \frac{q-3}{2} + 4 + \left(\frac{q+1}{2n} - 1\right) =  \frac{(n+1)(q-3)+4}{2n} + 5$$
and 
$$s(n,q)= 2.1 + \frac{q-3}{2}.q(q+1) + 4.\frac{q^2-1}{2} + \left(\frac{q+1}{2n} - 1\right).q(q-1) = \frac{(n+1)(q^3-q)}{2n}.$$
\end{enumerate}
\end{proof}
As a consequence to our counting above, we give an alternate proof to a well known Waring type result (see~\cite{lost, lu, gm} for more general results) for the groups ${\rm SL}_2(\mathbb F_q)$.
\begin{corollary}\label{two-n-powers}
Let $n > 2$ be a prime and $q$ odd. Then the word map $X_1^nX_2^n$ is surjective on ${\rm SL}_2(\mathbb F_q)$ except when $n=3=q$. 
\end{corollary}
\begin{proof}
If $n$ does not divide the order of the group the word map $X^n$ itself is surjective. Thus we need to look at the case when $n$ divides the order of the group.
If $n\nmid q$ then from Theorem~\ref{count-prime-powers} it is clear that the number of $n^{\rm th}$ powers is $\frac{n+1}{2n} > \frac{1}{2}$. Hence, the product of two such elements will cover whole of the group. 

Now, we are left with the case $n\mid q$. The proportion of elements which are $n^{\rm th}$ powers in ${\rm SL}_2(\mathbb F_q)$ is $\frac{q-2}{q}$ which is $> \frac{1}{2}$ if $q\geq 5$, and we are done. This leaves us with the case when $q=3$ and $n=3$. In this case non-semisimple classes are not cubes and neither product of cubes, as discussed in the following paragraph. Thus the proof concludes.
\end{proof}
We now discuss the exception case $n = 3 = q$ of the above theorem. In the group ${\rm SL}_2(\mathbb F_3)$, the word map $X_1^3 X_2^3$ is not surjective. The group ${\rm SL}_2(\mathbb F_3)$ has $7$ conjugacy classes. Of these, $2$ conjugacy classes are central, $4$ are non-semisimple and $1$ is anisotropic regular semisimple corresponding to the irreducible polynomial $X^2 + 1$ over $\mathbb F_3$. The non-semisimple conjugacy classes are not cube and others are cube. Hence the number of conjugacy classes which are cube is $3$ and the number of elements in ${\rm SL}_2(\mathbb F_3)$ which are cube is  $2.1 + 1. 3(3 - 1) = 8$. These are the following elements.
{\small \begin{center}
$$\begin{pmatrix} 1 & 0 \\ 0 & 1 \end{pmatrix}, 
\begin{pmatrix} -1 & 0 \\ 0 & -1 \end{pmatrix}, 
\begin{pmatrix} 0 & 1 \\ -1 & 0 \end{pmatrix}, 
\begin{pmatrix} 1 & 1 \\ 1 & -1 \end{pmatrix}, 
\begin{pmatrix} 1 & -1 \\ -1 & -1 \end{pmatrix}, 
\begin{pmatrix} 0 & -1 \\ 1 & 0 \end{pmatrix}, 
\begin{pmatrix} -1 & 1 \\ 1 & 1 \end{pmatrix}, 
\begin{pmatrix} -1 & -1 \\ -1 & 1 \end{pmatrix}.
$$
\end{center}}
\noindent These $8$ elements form a subgroup of ${\rm SL}_2(\mathbb F_3)$ isomorphic the quaternion group. Thus we conclude that in ${\rm SL}_2(\mathbb F_3)$ product of cubes is again a cube. Therefore non-cubes in this group, which are $16$ in count, can never be products of any number of cubes. In particular, the word map $X_1^3 X_2^3 \dots X_l^3$ on ${\rm SL}_2(\mathbb F_3)$ is not 
surjective for any $l\geq 1$.

\section{Product of powers in groups of type $A_1$}
As an application of our methods, we discuss products of powers in the forms of ${\rm SL}_2$ over base field $k$ when ${\rm char}(k)\neq 2$. 
We recall that groups $G$ of the type $A_1$ are the following.
\begin{enumerate}
\item[I.] (Split case) $G = {\rm SL}_2(k)$.
\item[II.] (Anisotropic case) $G = {\rm SL}_1(Q)$, where $Q$ is a quaternion division algebra over $k$.
\end{enumerate}

When $k = \mathbb F_q$, results are known for a larger class of groups (see~\cite{lost, lu, gm}). 
The results for anisotropic case, and for split case with arbitrary $k$ are new.


\begin{theorem}\label{waring-type-main-result}
Let $k$ be a field with ${\rm char}(k) \neq 2$. Let $G$ be a group of type $A_1$ over $k$.
\begin{enumerate}
\item If $G$ is split then the word map $X_1^n X_2^n$ is surjective on $G$ if $n = 2$, or $n = 3$ and ${\rm char}(k) \neq 3$.
\item If $G$ is anisotropic and $2 \in (k^*)^2$ then the word map $X_1^2 X_2^2$ is surjective on $G$ if and only if $-1$ is a square in $G$.
\item If $G$ is anisotropic and $n \geq 3$ is odd, then the word map $X_1^n X_2^n$ is surjective if $-1$ is a square in $G$.
\end{enumerate}
\end{theorem}

We prove this theorem in the rest of this section.

\begin{proof}[\bf{Proof of Theorem~\ref{waring-type-main-result}(1)}]
We use the description of squares in ${\rm SL}_2(k)$ obtained in Lemma~\ref{squares-in-sl2}, along with the notation $\mathcal S_1$ and $\mathcal S_2$
therein. We first prove that all elements of $B\n B$ are products of two squares. For this,  we take $x_1^2\in \mathcal S_1$ and $x_2^2\in B\n B$. The product is $x_1^2x_2^2 =  \h(a_1^2)\x_{12}(t_1).\x_{12}(t_2) \n(a_2)  \x_{12}(-t_2 + a_2(s_2^2 - 2))  = \x_{12}(a_1^{4}(t_1+t_2)) \n(a_1^2a_2) \x_{12}(-t_2 + a_2(s_2^2-2))$.
By taking $a_1=1$, we see that we get all elements of $B\n B$. Now, to get all elements of $B$ we take products from $\mathcal S_2$ as follows, $x_1^2x_2^2 =  \x_{12}(t_1)\n(a_1) \x_{12}(-t_1 +a_1(s_1^2-2)). \x_{12}(t_1 - a_1(s_1^2-2)) \n(a_2) \x_{12}(-t_1 +a_1(s_1^2-2) +a_2(s_2^2-2)) = \h(-a_1a_2^{-1}) \x_{12}(-t_1+ a_1(s_1^2-2)+a_2(s_2^2-2) + a_1^{-2}a_2^{2}t_1)$. This covers all elements of $B$ except when $a_1=\pm 1$. Thus the elements which are left out so far are $\pm \x_{12}(t)$. These are obtained by multiplying elements of $\mathcal S_1$ together with $-1$ from $\mathcal S_2$. This completes the
case of product of two squares.

For the case of product of two cubes, we note that the elements of the form $\pm \x_{12}(\psi)$ are always cube. Other elements $\h(a)\x_{12}(s)$ of $B$ are cube if $a\in (k^*)^3$. More importantly, $\x_{12}(\tau)\n(\alpha)\x_{12}(\psi)$ with $\tau+\psi=2\alpha$ are certainly cube. Let us consider product of cubes of the form $\x_{12}(\tau)\n(\alpha)\x_{12}(2\alpha-\tau)$ which are in $B\n B$. Take the product when last term of $x_1^3$ is same as inverse of the first term of $x_2^3$. We get,
\begin{eqnarray*}
x_1^3x_2^3 &=&  \x_{12}(\tau_1)\n(\alpha_1)\x_{12}(2\alpha_1-\tau_1). \x_{12}(-2\alpha_1+\tau_1)\n(\alpha_2) \x_{12}( 2\alpha_2+ 2\alpha_1-\tau_1)\\
&=&  \x_{12}(\tau_1)\h\left(-\frac{\alpha_1}{\alpha_2}\right) \x_{12}( 2\alpha_2+ 2\alpha_1-\tau_1)\\
&=& \h\left(-\frac{\alpha_1}{\alpha_2}\right) \x_{12}\left(\left (\frac{\alpha_2^2}{\alpha_1^2} -1\right) \tau_1 + 2(\alpha_1+ \alpha_2)\right).
\end{eqnarray*}
Combined with the fact that $\pm\x_{12}(\psi)$ are already cube, all elements of $B$ are a product of two cubes. Now, let us compute the product when $x_1^3=\x_{12}(\psi)$ and $x_2^3$ is in $B\n B$ of the above kind. We get, 
\begin{eqnarray*}
x_1^3x_2^3 &=&   \x_{12}(\tau_1). \x_{12}(\tau_2)\n(\alpha_2)\x_{12}(2\alpha_2-\tau_2)\\
&=& \x_{12}(\tau_1+ \tau_2)\n(\alpha_2)\x_{12}(2\alpha_2-\tau_2).
\end{eqnarray*}
This shows that all elements of $B\n B$ are also a product of two cubes.
This completes the proof
of \ref{waring-type-main-result}(1). 
\end{proof}



To prove Theorem \ref{waring-type-main-result}(2) and \ref{waring-type-main-result}(3)  we set up some lemmas. 
Let $k$ be a field of characteristic $\neq 2$ and  $Q = \left(\frac{a,\lambda}{k}\right)$ be a quaternion division algebra over $k$. We recall that every element $\psi$ of $Q$ satisfies the quadratic equation $\psi^2-Tr(\psi) \psi + N(\psi)=0$.
\begin{lemma}\label{Q-square}
Let $(\alpha, \beta) \in {\rm SL}_1(Q)$ with $\beta\neq 0$. The, $X^2=(\alpha,\beta)$ has a solution in ${\rm SL}_1(Q)$ if and only if $Tr(\alpha) + 2\in (k^*)^2$.
\end{lemma}
\begin{proof}
Let us first assume $Tr(\alpha) + 2\in (k^*)^2$. Let $(x,y)\in Q$ be a solution of $X^2=(\alpha,\beta)$. Then, $(x,y)^2= Tr(x)(x,y) - 1$ gives the equations $Tr(x)x-1 = \alpha$ and $Tr(x)y=\beta$. For the simplicity of further calculation, let us write $x=l+m\zeta$ and $\alpha=\alpha_1 + \alpha_2\zeta$ where $\alpha_1, \alpha_2, l, m\in k$. In case $Tr(\alpha)=2 \alpha_1 \neq -2$, i.e., $\alpha_1\neq -1$, the equation $2l^2-1=\alpha_1$ give $l\neq 0$ and $y = \frac{\beta}{2l}$. Hence, $l^2= \frac{Tr(\alpha)+2}{4}$ and $x= \frac{\alpha +1}{2l}$ gives the solution. In the case $\alpha_1=-1$, we get $l=0$, i.e., $Tr(x)=0$, thus $\alpha=-1$ and $\beta=0$ which is not the case.

Now, suppose $X^2=(\alpha, \beta)$ has a solution, say $(x,y)$. Then, the equation $Tr(x)x=\alpha+1$, after taking trace, gives that  $Tr(x)^2=Tr(\alpha)+2$. Clearly $Tr(x)\neq 0$ else $\beta =0$. 
\end{proof}
\begin{lemma}
Let $\phi=(\alpha, \beta) \in {\rm SL}_1(Q)$ with $Tr(\alpha)=0$ and $\beta\neq 0$. Then, for any $n$ odd, the equation $X^n=\phi$ has a solution in ${\rm SL}_1(Q)$.
\end{lemma}
\begin{proof}
We have, $\phi^2=-1$. Hence, $\phi^n=\phi$ if $n\equiv 1 \imod 4$ and $\phi^n=-\phi$ when $n\equiv 3\imod 4$. Hence, all such $\phi\in Q$ have $n^{\rm th}$ root which is either $\phi$ or $-\phi$.
\end{proof}

\begin{proof}[{\bf Proof of \ref{waring-type-main-result}(2) and \ref{waring-type-main-result}(3)}]
We prove (1) first. Let $-1$ be a square in ${\rm SL}_1(Q)$. Consider the set $\mathcal H = \{(\alpha, \beta) \mid N(\alpha, \beta)=1, Tr(\alpha)=0\}$.  The set $\mathcal H$ consists of square-root of $-1$ in ${\rm SL}_1(Q)$ and, is non-empty, further it is contained in squares (Lemma~\ref{Q-square}). We consider $(\alpha_1,\beta_1)$ and $(\alpha_2, \beta_2)$ in $\mathcal H$ and,
$$(\alpha_1,\beta_1)(\alpha_2, \beta_2)= (\alpha_1\alpha_2+\lambda \beta_1\overline \beta_2, \alpha_1\beta_2 +\overline \alpha_2\beta_1).$$
Thus, when $\alpha_1\beta_2 +\overline \alpha_2\beta_1\neq 0$, say $\gamma$, then the product becomes $(\frac{\gamma\alpha_2-\beta_1}{\beta_2},\gamma)$. This covers all elements $(x,y)\in {\rm SL}_1(Q)$ with $y\neq 0$. Now we need to produce elements of kind $(x,0)$ as a product of two squares. We note that when $\alpha_1\beta_2 +\overline \alpha_2\beta_1=0$, $(\alpha_1,\beta_1)(\alpha_2, \beta_2)= \left (-\frac{\overline\beta_2}{\overline\beta_1}, 0\right ) = \left (-\frac{\alpha_2}{\overline\alpha_1}, 0\right )$. Thus it reduces to prove that every element of $K$ is a product of two elements from the set $S=\{\alpha\in K \mid Tr(\alpha)+2\in (k^*)^2\}$ which is easy to verify. 

Now to prove converse, we begin with $-1=(\alpha_1,\beta_1)(\alpha_2, \beta_2)$ where $(\alpha_1,\beta_1)=(x_1,y_1)^2=(x_1^2+\lambda N(y_1), Tr(x_1)y_1)$ and $(\alpha_2, \beta_2) = (x_2, y_2)^2$. We may assume $Tr(x_1)$ and $Tr(x_2)$ both are not $0$ else we get $-1$ as a square. If $y_1=0$ then $-1= x_1^2 (x_2^2+\lambda N(y_2), Tr(x_2)y_2)$ hence $Tr(x_2)y_2=0$. If $y_2=0$ we get $-1= x_1^2x_2^2$ and if $Tr(x_2)=0$ we would have $(x_2,y_2)^2=-1$. Similarly we can prove this when $y_2=0$, thus we may assume $\beta_1$ and $\beta_2$ both are non-zero. Now we use ${\rm SL}_2(K)$ notation and we have, 
$$-1 = \x_{12}\left(\frac{\alpha_1}{\bar \beta_1}\right)\n\left(\frac{-1}{\bar \beta_1}\right) \x_{12}\left(\frac{\bar \alpha_1}{\bar \beta_1}\right) \x_{12}\left(\frac{\alpha_2}{\bar \beta_2}\right)\n\left(\frac{-1}{\bar \beta_2}\right) \x_{12}\left(\frac{\bar \alpha_2}{\bar \beta_2}\right)$$
in ${\rm SL}_2(K)$. From Proposition~\ref{coset-mult}  (3), we must have $\frac{\bar \alpha_1}{\bar \beta_1} = -\frac{\alpha_2}{\bar \beta_2} $. Write $\frac{\bar \alpha_1}{\alpha_2} = -\frac{\bar \beta_1}{\bar \beta_2} =\theta$ and get $(\alpha_1, \beta_1) = \bar \theta (\alpha_2, - \beta_2)$. This gives $\theta=-1$ and thus equation becomes $-1= (-\bar \alpha_2, \beta_2) (\alpha_2, \beta_2)$. Write $\alpha_2=r+\zeta s$, and we get $Tr(-\bar \alpha_2)+2= -2r+2 \in (k^*)^2$ and $Tr(\alpha_2)+2 = 2r+2\in (k^*)^2$. Thus $1-r^2\in (k^*)^2$. Now $1=N(\alpha_2, \beta_2)=r^2-as^2-\lambda N(\beta_2)$ implies that the quadratic form $<1, a, \lambda, - a\lambda>$ is isotropic which is equivalent to $-1$ being a square (see~\cite{la} Chapter III Exercise 5). In fact, it gives $\left(\frac{s\zeta}{r'}, \frac{\beta_2}{r'}\right)$ where $1-r^2=r'^2$, of which square is $-1$.

For the proof of (2), we consider the set $\mathcal H=\{(\alpha, \beta) \in {\rm SL}_1(Q) \mid Tr(\alpha)=0, \beta\neq 0\}$ contained in $n^{\rm th}$ powers. For the set $\mathcal H$ to be non-empty we require $-1$ to be a square. Rest of the proof is similar to as above.
\end{proof}
\noindent We end this section with some examples.
\begin{example}
Let $k=\mathbb Q$ and $g=\begin{pmatrix} & 1 \\ -1 & -a\end{pmatrix} \in {\rm SL}_2(\mathbb Q)$ where $a\neq 0$. Clearly, $g=\n \x_{12}(a)$. From Lemma~\ref{squares-in-sl2} it follows that $x^2=g$ has a solution in ${\rm SL}_2(k)$ if, and only if, $8-2a^2 \in \mathbb Q^2$. Thus, if $a\geq 5$ then $g$ does not have a square root in ${\rm SL}_2(\mathbb Q)$. Similarly, we can produce elements which are not $n^{\rm th}$ power in ${\rm SL}_2(\mathbb F_q)$ using the discussion preceding Lemma~\ref{ars-power}.
\end{example}

\begin{example}
Consider $k=\mathbb R$ and $Q=\mathbb H$, the Hamilton's quaternion. Consider an element $(\alpha, \beta)\in {\rm SL}_1(\mathbb H)$ with $\beta\neq 0$. Any such element with $Tr(\alpha) \leq -3$ is not a square using Lemma~\ref{Q-square}. 
\end{example}


\end{document}